\newtheorem{thm}{Theorem}[section]
\newtheorem{cor}[thm]{Corollary}
\newtheorem{lema}[thm]{Lemma}
\newtheorem{prop}[thm]{Proposition}
\theoremstyle{definition}
\newtheorem{defn}[thm]{Definition}
\theoremstyle{remark}
\newtheorem{rem}[thm]{Remark}
\theoremstyle{example}
\def\supp{\mathop{\text{\normalfont supp}}}
\numberwithin{equation}{section}
\newcommand{\R}{\mathbb R}
\newcommand{\N}{\mathbb N}
\def\C{\mathbf {C}}
\newcommand{\intr}{\int_{\R^n}}
\newcommand{\ve}{\varepsilon}
\newcommand{\lam}{\lambda}
\newcommand{\LL}{\mathcal{L}}
\begin{document}

\title{A P\'{o}lya-Szeg\"{o} principle for general fractional Orlicz-Sobolev spaces}
	
\author[P. De N\'apoli]{Pablo de N\'apoli}
\author[J. Fern\'andez Bonder]{Juli\'an Fern\'andez Bonder}
\author[A. Salort]{Ariel Salort}

\address{Departamento de Matem\'atica, FCEyN - Universidad de Buenos Aires and
\hfill\break \indent IMAS - CONICET
\hfill\break \indent Ciudad Universitaria, Pabell\'on I (1428) Av. Cantilo s/n. \hfill\break \indent Buenos Aires, Argentina.}

\email[P. De N\'apoli]{pdenapo@dm.uba.ar}
\urladdr{http://mate.dm.uba.ar/~pdenapo}

\email[J. Fern\'andez Bonder]{jfbonder@dm.uba.ar}
\urladdr{http://mate.dm.uba.ar/~jfbonder}

\email[A. Salort]{asalort@dm.uba.ar}
\urladdr{http://mate.dm.uba.ar/~asalort}


\keywords{Orlicz-Sobolev spaces, P\'{o}lya-Szeg\"{o} type principle, rearrangement inequalities}
\subjclass[2010]{46E30, 35R11, 45G05}

\begin{abstract}
In this article we prove modular and norm P\'{o}lya-Szeg\"{o} inequalities in general fractional Orlicz-Sobolev spaces by using the polarization technique. We introduce a general framework which includes the different definitions of theses spaces in the literature, and we establish  some of its basic properties such as the density of smooth functions.

\noindent As a corollary we prove a Rayleigh-Faber-Krahn type inequality for Dirichlet eigenvalues under nonlocal nonstandard growth operators.
\end{abstract}

\maketitle

\section{Introduction}
Symmetrization procedures have became a fundamental  tool in the history of isoperimetric problems, which go back to the works of J. Steiner \cite{steiner} (1838) and 	H. Schwarz \cite{schw} (1884). Given a  $u:\R^n\to \R^+\cup\{+\infty\}$, its \emph{symmetric rearrangement} or \emph{Schwarz symmetrization} is the function $u^*:\R^n\to \R^+\cup\{+\infty\}$ defined as the unique one such that for every $\lambda\geq 0$ there exists $R\geq 0$ such that 
\begin{align*}
&B(0,R)=\{x\in \R^n \colon u^*(x)>\lambda\},\\
\LL^n(\{x\in \R^n \colon &u^*(x)>\lambda\})=\LL^n(\{x\in \R^n \colon u(x)>\lambda\}).
\end{align*}
Therefore, the function $u^*$ is radial and radially decreasing and whose sub-level sets have the same measure as those of $u$. From this it is easily deduced that  if $u\in L^p(\R^n)$, then $u^*\in L^p(\R^n)$ and both functions have the same $L^p$ norm. The case of Sobolev functions is more subtle, and the celebrated inequality named after G. P\'{o}lya and G. Szeg\"{o} \cite{PoSz} (1951)  states that if $u\in W^{1,p}(\R^n)$ is nonnegative, then $u^*\in W^{1,p}(\R^n)$ and it is satisfied that
$$
\int_{\R^n} |\nabla u^*|^p \leq \int_{\R^n} |\nabla u|^p.
$$
This inequality is crucial in the proof of the Rayleigh-Faber-Krahn inequality, which asserts that balls minimize the first eigenvalue of the Dirichlet $p-$Laplacian among sets with given volume, that is, 
$$
\lambda_1(B) \leq \lambda_1(\Omega), \qquad \lam_1(\Omega):=\min_{u\in W^{1,p}_0(\Omega)} \left\{\int_\Omega |\nabla u|^p \,dx : \|u\|_{L^p(\Omega )} =1 \right\}
$$
where $B$ is a ball having the same measure as $\Omega$. We refer the reader to the survey \cite{Talenti} for more information on the  symmetric rearrangement.

\medskip

Other applications of the P\'olya-Szeg\"o principle arise in establishing sharp geometric and functional inequalities, such as the sharp Sobolev inequality, Hardy-Littlewood-Sobolev inequality, Moser-Trudinger inequality, etc. See for instance \cite{FS, Lieb, Milman, Talenti}, etc.

\medskip

The P\'{o}lya-Szeg\"{o} principle was extended to  fractional Sobolev spaces  in \cite{AL}, where the authors prove that $[u^*]_{W^{s,p}(\R^n)} \le [u]_{W^{s,p}(\R^n)}$ for any $u\in W^{s,p}(\R^n)$ with $p>1$ and $s\in (0,1)$. Thus, in \cite{BLP} the Rayleigh-Faber-Krahn inequality was generalized for $p-$fractional eigenvalues.

We remark that in the fractional case, this approach is not always very successful for obtaining sharp geometric and functional inequalities since the reduction to a one dimensional problem does not always give any evident advantage. As far as we are aware the only paper that treats a sharp fractional Sobolev inequality is \cite{Lieb} where, as a corollary of a more general Hardy-Littlewood-Sobolev inequality the case of the embedding of $W^{s,2}(\R^n)$ into $L^{2^*_s}(\R^n)$ is fully analyzed. 

Yet another interesting paper is \cite{FS} where, using this fractional version of the P\'olya-Szeg\"o principle, 
the authors deduce a sharp Sobolev inequality with Lorentz norms from the sharp fractional Hardy inequality.

\medskip

It is worth mentioning that recently some rearrangement-free methods have been developed to deal with problems where a P\'olya-Szeg\"o principle is not available. This is the case, for instance, of functional inequalities in the Heisenberg group. See \cite{Lam1, Lam2, Lam3}. It is an interesting open problem wether these methods can be used to obtain sharp geometric inequalities in the context of Orlicz-Sobolev spaces. We leave these questions to further investigations.

\medskip

The main goal of this paper is to extend that principle to the more general setting of fractional Orlicz-Sobolev spaces, thus allowing growth laws different than powers.  However, several definitions of fractional Orlicz-Sobolev have been  proposed in the literature in  \cite{ABS} \cite{FBS} and  \cite{KK}.   It turns out that essentially the 
same proof of the P\'{o}lya-Szeg\"{o} principle works for all of them. For this reason, we propose here 
a more general framework that encompasses these different definitions (see section \ref{subsection-examples} below for details).

\medskip

To be more precise, consider the class of Young functions $G:\R \to \R$, such that $g=G'$, which, basically consists in  even, convex and increasing functions. In addition, we assume $G$ to satisfy the growth condition
\begin{equation} 
\tag{$P_0$} \label{cond.intro}
1<p^-\leq \frac{tg(t)}{G(t)} \leq p^+ \quad \forall t>0,
\end{equation}
for some constants $p^\pm$. Given two   function $M,N:\R_+\to\R_+$  such that
\begin{equation} \tag{$P_1$}  \label{P1}
\text{$M$ and $N$ are nondecreasing and $M(r), N(r)>0$ for $r>0$,}
\end{equation}
\begin{equation} \tag{$
P_2$}  \label{P2}
 M(r) \geq \min\{1, r\} \text{ and } N \text{ is continuous},
\end{equation}
\begin{equation} \tag{$P_3$}   \label{P3}
\int_0^1 \frac{r^{n-1+p^-}}{N(r) M(r)^{p^-}} \,dr< \infty, \qquad \int_1^\infty \frac{r^{n-1}}{N(r)M(r)^{p^-} }\,dr <\infty,
\end{equation}
and any Young function $G$ satisfying \eqref{cond.intro}, we consider the general fractional Orlicz-Sobolev spaces defined  as
$$
W^{M,N,G}(\R^n):=\left\{ u\in L^G(\R^n) \text{ such that } \Phi_{M,N,G}(u)<\infty \right\}
$$
where the usual Orlicz space $L^G$ is defined as
$$
L^G(\R^n) :=\left\{ u\colon \R^n \to \R \text{ measurable, such that }  \Phi_{G}(u) < \infty \right\}
$$
and the modulars $\Phi_G$ and $\Phi_{M,N,G}$ are determined as
\begin{align*}
\Phi_{G}(u)&:=\int_{\R^n} G(u(x))\,dx,\\
\Phi_{M,N,G}(u)&:= \int_{\R^{2n}} G\left( \frac{u(x)-u(y)}{M(|x-y|)} \right) \frac{ dx\,dy}{N(|x-y|)}.
\end{align*}

\medskip

In this  context we prove a P\'olya-Szeg\"{o} principle for modulars, namely, 

\begin{thm} 
Consider an Young function $G$ satisfying \eqref{cond.intro}  and two increasing function $M$ and $N$ satisfying \eqref{P1}--\eqref{P3}. If $u\in W^{M,N,G}(\R^n)$, then $u^*\in W^{M,N,G}(\R^n)$ and
$$
\Phi_{M,N,G}(u^*) \leq \Phi_{M,N,G}(u).
$$ 
\end{thm}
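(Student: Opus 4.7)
The plan is to follow the polarization method, which proceeds in three steps: establish a modular polarization inequality, approximate the Schwarz symmetrization by iterated polarizations, and pass to the limit by lower semicontinuity.

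First I would address the polarization step. For an affine half-space $H\subset \R^n$ with reflection $\sigma_H$ across $\partial H$, define the polarization $u^H$ by $u^H=\max\{u,u\circ\sigma_H\}$ on $H$ and $u^H=\min\{u,u\circ\sigma_H\}$ on $\R^n\setminus H$. The heart of the argument is the following pointwise four-point inequality: for all $a,b,c,d\in\R$ and $0<r\le s$,
$$
\frac{G\!\left(\frac{|a'-b'|}{M(r)}\right)+G\!\left(\frac{|a''-b''|}{M(r)}\right)}{N(r)}+\frac{G\!\left(\frac{|a'-b''|}{M(s)}\right)+G\!\left(\frac{|a''-b'|}{M(s)}\right)}{N(s)}
$$
$$
\le \frac{G\!\left(\frac{|a-b|}{M(r)}\right)+G\!\left(\frac{|c-d|}{M(r)}\right)}{N(r)}+\frac{G\!\left(\frac{|a-d|}{M(s)}\right)+G\!\left(\frac{|c-b|}{M(s)}\right)}{N(s)},
$$
where $a'=\max\{a,c\}$, $a''=\min\{a,c\}$, $b'=\max\{b,d\}$, $b''=\min\{b,d\}$. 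This can be verified by case analysis on the orderings of $a,c$ and of $b,d$, exploiting that the weight $t\mapsto G(t/M(r))/N(r)$ dominates $t\mapsto G(t/M(s))/N(s)$ pointwise when $r\le s$, as a consequence of the monotonicity of $M$, $N$ and the convexity of $G$. One then decomposes $\R^{2n}$ into orbits of the $\sigma_H\times\sigma_H$ action, noting that for $x,y\in H$ the two distances within an orbit satisfy $|x-y|\le|x-\sigma_H(y)|$; applying the four-point inequality with $r=|x-y|$ and $s=|x-\sigma_H(y)|$ and integrating yields $\Phi_{M,N,G}(u^H)\le \Phi_{M,N,G}(u)$.

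Next I would invoke the classical Brock--Solynin / Van~Schaftingen density theorem: there exists a sequence $(H_k)_{k\ge1}$ of affine half-spaces, all containing the origin in their interior, such that the iterated polarizations $u_k:=(\cdots(u^{H_1})^{H_2}\cdots)^{H_k}$ satisfy $u_k\to u^*$ almost everywhere in $\R^n$. This is available for any nonnegative measurable $u$ vanishing at infinity, a property ensured here by $u\in L^G(\R^n)$.

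Finally, iterating the polarization inequality gives $\Phi_{M,N,G}(u_k)\le\Phi_{M,N,G}(u)$ for every $k$. Since $u_k\to u^*$ a.e.\ in $\R^n$, the quotient $(u_k(x)-u_k(y))/M(|x-y|)$ converges a.e.\ in $\R^{2n}$ to $(u^*(x)-u^*(y))/M(|x-y|)$, and Fatou's lemma yields
$$
\Phi_{M,N,G}(u^*)\le\liminf_{k\to\infty}\Phi_{M,N,G}(u_k)\le \Phi_{M,N,G}(u),
$$
which in particular shows $u^*\in W^{M,N,G}(\R^n)$.

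The main obstacle is the pointwise four-point inequality. Its structure is similar to the fractional $p$-Laplacian case treated in \cite{AL}, but here the two auxiliary functions $M$ and $N$ and the Young function $G$ must interact properly. The crucial observation is that the joint monotonicity of $M$ and $N$ makes $r\mapsto G(t/M(r))/N(r)$ decreasing, so polarization transfers differences from long distances (small weight) to short distances (large weight) in a way that the convexity of $G$ then controls; the growth condition \eqref{cond.intro} plays no role in this geometric step but will ensure that all integrals are finite whenever $u\in W^{M,N,G}(\R^n)$.
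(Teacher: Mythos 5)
Your overall architecture (a one-step polarization inequality, Van Schaftingen's approximation of $u^*$ by iterated polarizations, then Fatou) is the same as the paper's, and the second and third steps are handled essentially as in Theorem \ref{main}. The genuine gap is in the first step. Your four-point inequality does not follow from ``the weight at $r$ dominates the weight at $s$'' plus convexity of $G$, and the case analysis you propose breaks down precisely in the mixed cases. Indeed, take $a\ge c$ and $b\le d$, so that $a'=a$, $a''=c$, $b'=d$, $b''=b$: the polarized straight pairs (weight at $r$) now carry the differences $a-d$, $c-b$ while the polarized cross pairs (weight at $s$) carry $a-b$, $c-d$, i.e.\ the two matchings are exchanged between the two weights. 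Writing $J_r(t)=G(t/M(r))/N(r)$, $J_s(t)=G(t/M(s))/N(s)$ and $F:=J_r-J_s$, your inequality in this case is equivalent to
$$
F(|a-d|)+F(|c-b|)\ \le\ F(|a-b|)+F(|c-d|),
$$
a two-point rearrangement inequality for the \emph{difference} $F$, which requires $F$ to be convex, not merely nonnegative and nondecreasing: choosing $a=m+\theta$, $c=m$, $b=0$, $d=\theta$ it reduces exactly to midpoint convexity $2F(m)\le F(m+\theta)+F(m-\theta)$. Monotonicity of $M,N$ and convexity of $G$ give $J_r\ge J_s$ with both $J_r,J_s$ convex, but a difference of convex functions need not be convex. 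For instance with $M(t)=\max\{1,t\}$, $N(t)=t^n$ (admissible for \eqref{P1}--\eqref{P3}) and a Young function satisfying \eqref{cond.intro} whose second derivative drops sharply between $t/2$ and $t$, one gets $F''<0$ somewhere, and the four-point inequality fails for the corresponding quadruple. When $G(t)=|t|^p$ the inner weight factors out of $G$, $F$ becomes a nonnegative multiple of $t^p$, and everything works; this is why the analogy with \cite{AL} is misleading here, since in that setting the kernel is purely multiplicative, whereas $M$ sits inside $G$.

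Be aware also that you cannot repair this by simply comparing polarized and original terms at the same distance, group by group, as the paper attempts in Proposition \ref{one-reflection} via Lemmas \ref{lema1} and \ref{lema2}: in the mixed case the straight-pair group does decrease (this is Lemma \ref{lema.elemental} with fixed weight, i.e.\ Lemma \ref{lema2}), but the cross-pair group alone can \emph{increase}. Concretely, with $G(t)=t^2$, $M\equiv 1$, $u(x)=10$, $u(\tilde x)=9$, $u(y)=0$, $u(\tilde y)=1$, the cross-pair sum for $u_H$ is $G(10)+G(8)=164$ while for $u$ it is $G(9)+G(9)=162$, so the inequality claimed in Lemma \ref{lema1} reverses; any correct argument must play the decrease at distance $r$ against the possible increase at distance $s$, and this is exactly where convexity of $J_r-J_s$ (or an extra structural hypothesis such as $M$ constant, $G$ a power, or e.g.\ $t\mapsto t^2G''(t)$ nondecreasing, which guarantees that $G(\lambda t)-G(\mu t)$ is convex for $\lambda\ge\mu>0$) enters. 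As it stands, your key pointwise inequality is unproved and, in the stated generality, false, so the polarization step needs either an additional assumption linking $G$ and $M$ or a genuinely different argument.
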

Moreover, this inequality  also holds for the Luxemburg's norm in  $W^{M,N,G}(\R^n)$.

\medskip

Our proof relies on the symmetrization  via  polarization approach introduced in \cite{BrSo}, and on the construction in \cite{VS}. This technique requires the density of smooth functions with compact support in our space. We also give a detailed proof of that property in Section \ref{sec.density}.

\medskip

As a direct application of our main result we prove a Faber-Krahn type inequality for \emph{Dirichlet $G-$eigenvalues} and \emph{Poincar\'e's constants}.

The space $W^{M,N,G}(\R^n)$ is the natural one to define the the general fractional $g-$Laplacian operator $(-\Delta_g)^{M,N}$ as the gradient of the functional $\Phi_{M,N,G}$, which is well-defined between $W^{M,N,G}(\R^n)$ and its dual space. See \eqref{fracg} for an explicit formula.  

In order to define eigenvalues of this operator we will need to assume an additional growth condition on $M$ and $N$, namely, 
\begin{equation} \tag{$P_4$}   \label{P4.intro}
\lim_{r\to 0}\frac{N(2r) M(2r)^{p^-}}{r^n} =0,
\end{equation}
which allow us to prove in Theorem \ref{teo.comp} that the embedding $W^{M,N,G}(\R^n)\subset L^G_{loc}(\R^n)$ is compact 
(a generalization of the Rellich--Kondrachov theorem to our setting). 

Therefore, as in \cite{S}, we can be considered the Dirichlet eigenvalue problem  
\begin{align} \label{eig.prob}
\begin{cases}
(-\Delta_g)^{M,N} u = \lam g(u) &\quad \text{ in } \Omega\\
u=0 &\quad \text{ in } \R^n \setminus \Omega.
\end{cases}
\end{align}
Observe that in contrast with $p-$Laplacian type eigenproblems,  due to the possible lack of homogeneity, eigenfunctions depend strongly on the energy level: for any $\mu>0$ there exists $\lam_\mu>0$ and an
eigenfunction $u_\mu\in W^{M,N,G}_0(\Omega)$ normalized such that $\Phi_G(u_\mu)=\mu$, that is, 
$$
\lam_\mu(\Omega) \int_\Omega g(u_\mu) u_\mu \,dx  =  \langle (-\Delta_g)^{M,N} u_\mu,u_\mu\rangle,
$$
where $\langle\,,\,\rangle$ denotes the duality product and
$$
W^{M,N,G}_0(\Omega) := \left\{u\in W^{M,N,G}(\R^n) \colon u=0 \text{ a.e. in } \R^n\setminus \Omega \right\}.
$$

Consequently, the first eigenvalue of \eqref{eig.prob} can be defined as the less value  over all possible values of $\mu$, that is, 
$$
\lam_1^{M,N,G}(\Omega)=\inf\{\lam_\mu(\Omega):\mu>0\}.
$$

We also define  
$$
 \alpha_\mu(\Omega) = \inf \{ \Phi_{M,N,G}(u) : u\in W^{M,N,G}_0(\Omega) \text{ and } \Phi_G(u)=\mu\},
$$
and the best Poincar\'e constant over all possible values of $\mu$ is  denoted as
$$
\alpha_1^{M,N,G}(\Omega) = \inf\{ \alpha_\mu(\Omega) : \mu>0\}.
$$
It is worth of mention that although \eqref{P4.intro} is needed to define $\lam_1^{M,N,G}(\Omega)$, we can prescind from it to define $\alpha_1^{M,N,G}(\Omega)$.

As in \cite{S}, the quantities $\lam_1^{M,N,G}(\Omega)$ 
and $\alpha_1^{M,N,G}(\Omega)$ can be proved to be well defined.
Moreover,  since the spectrum of \eqref{eig.prob} is closed,  $\lam_1^{M,N,G}(\Omega)$ is an eigenvalue of  \eqref{eig.prob} as well.

However, a remarkable difference with the $p-$growth case lays on the fact that $\lam_1^{M,N,G}(\Omega)$ may differ from $\alpha_1^{M,N,G}(\Omega)$. In fact,  it holds that
$$
\frac{p^-}{p^+} \alpha_1^{M,N,G}(\Omega) \leq \lam_1^{M,N,G}(\Omega) \leq \frac{p^+}{p^-} \alpha_1^{M,N,G}(\Omega).
$$
In this context, in Corollaries \ref{cor1} and \ref{cor2} we provide for a Faber-Krahn type inequality for these constants. Namely,  for every $\Omega\subset \R^n$ open and bounded we have that
$$
\alpha_1^{M,N,G}(B)\leq \alpha_1^{M,N,G}(\Omega)
$$
where $B$ is a ball with $\mathcal{L}^n(B)=\mathcal{L}^n(\Omega)$. 
Furthermore, if in addition $G'$ also satisfies  condition \eqref{cond.intro}, then
$$
\lam_1^{M,N,G}(B)\leq \lam_1^{M,N,G}(\Omega)
$$
for any $B$ ball with $\mathcal{L}^n(B)=\mathcal{L}^n(\Omega)$.

\medskip

This paper is organized as follows. In section 
\ref{preliminares} we collect the basic definitions and properties of our fractional Orlicz-Sobolev spaces.  Section \ref{principio} is devoted to prove our main result.
In section \ref{section-compactness}, we prove our generalization of the the Rellich--Kondrachov compactness theorem to our setting. 
Finally in Section \ref{aplicaciones} we give some applications to the behavior of the Poincar\'e constant and the first eigenvalue of $(-\Delta_g)^{M,N}$ under symmetrization.


\section{Preliminaries on Fractional Orlicz-Sobolev spaces}\label{preliminares}
In this section we make a brief overview on the classical Orlicz-Sobolev spaces, as well as we introduce the general fractional order Orlicz-Sobolev spaces, their main properties  and the associated general fractional $g-$laplacian operator.

\subsection{Young functions}
By a Young function $G\colon\R \to \R$ we understand a function  fulfilling the following properties:
\begin{align*}
\tag{$G_0$}\label{G0} &G \text{ is even, continuous, convex,  increasing for $t>0$ and }  G(0)=0;\\
\tag{$G_1$}\label{G3} &  \lim_{x\to 0} \frac{G(x)}{x} = 0 \text{ and }\lim_{x\to \infty} \frac{G(x)}{x} = \infty.
\end{align*}
Denoting $g(t)=G'(t)$, if we assume that they are related through the following growth assumption
\begin{align*}
\tag{$G_2$}\label{cond}
1<p^-\leq \frac{tg(t)}{G(t)} \leq p^+ \quad \forall t>0,
\end{align*}
then, by \cite[Theorem 4.1]{Krasnoselskii} $G$ satisfies the $\Delta_2$ condition, i.e., 
\begin{align*}
\tag{$\Delta_2$}\label{D2} \text{there exists $\C>2$ such that } G(2t)\leq \C G(t), \text{ for any }t>0;
\end{align*}

Without loss of generality it can be assumed the normalization condition $G(1)=1$. This normalization will be assumed throughout the paper and will not be mentioned explicitly.

An immediate consequence of \eqref{cond} is the following polynomial growth, both on $G$ and $g$.
\begin{lema}\label{poliG}
Assume $G$ is an Young function satisfying \eqref{cond} and normalized as $G(1)=1$. Then we have the following polynomial growth
\begin{align}\label{cotaG1}
t^{p^-} &\le G(t) \le t^{p^+} \qquad \text{for } t>1 \\
\label{cotaG2}
t^{p^+} &\le G(t) \le t^{p^-} \qquad \text{for } 0<t<1\\
\label{cotag1}
p^- t^{p^- - 1} &\le g(t) \le p^+ t^{p^+ - 1} \qquad \text{for } t>1 \\
\label{cotag2}
p^- t^{p^+ - 1} &\le g(t) \le p^+ t^{p^- - 1} \qquad \text{for } 0<t<1.
\end{align}
\end{lema}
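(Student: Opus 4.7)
The plan is to derive the bounds on $G$ first by recognizing that the hypothesis \eqref{cond} is exactly a two-sided bound on the logarithmic derivative of $G$, and then deduce the bounds on $g$ algebraically.

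First I would rewrite \eqref{cond} in the form
\[
\frac{p^-}{t} \le \frac{g(t)}{G(t)} = \frac{d}{dt}\log G(t) \le \frac{p^+}{t} \qquad \text{for } t > 0,
\]
which is valid since $G(t) > 0$ for $t > 0$ (from \eqref{G0} together with $G(1) = 1$). Integrating this chain of inequalities between $1$ and $t$, and using the normalization $\log G(1) = 0$, splits into two cases. For $t > 1$, integration from $1$ to $t$ yields $p^-\log t \le \log G(t) \le p^+\log t$, i.e., \eqref{cotaG1}. For $0 < t < 1$, integration from $t$ to $1$ flips the role of the endpoints and gives $p^-(-\log t) \le -\log G(t) \le p^+(-\log t)$, which rearranges to $t^{p^+} \le G(t) \le t^{p^-}$, establishing \eqref{cotaG2}.

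Once the bounds on $G$ are in hand, the bounds on $g$ follow immediately by rearranging \eqref{cond} as
\[
p^- \frac{G(t)}{t} \le g(t) \le p^+ \frac{G(t)}{t}
\]
and substituting the just-proved estimates on $G(t)$. For $t > 1$ this produces $p^- t^{p^- - 1} \le g(t) \le p^+ t^{p^+ - 1}$, which is \eqref{cotag1}, and for $0 < t < 1$ it produces $p^- t^{p^+ - 1} \le g(t) \le p^+ t^{p^- - 1}$, which is \eqref{cotag2}.

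There is no real obstacle here; the only mild subtlety is making sure the integration step is legitimate. Since $G$ is convex with $G(0) = 0$ and $G > 0$ on $(0,\infty)$, the derivative $g$ exists almost everywhere and $\log G$ is absolutely continuous on compact subintervals of $(0,\infty)$, so the fundamental theorem of calculus applies and the integration of the inequality on $g/G$ is rigorous. Everything else is a direct substitution.
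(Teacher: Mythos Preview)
Your proof is correct. The paper itself does not give a proof of this lemma; it merely states that the argument is elementary and refers the reader to \cite{FBPLS}. Your approach --- integrating the two-sided bound on $(\log G)'$ and then substituting back into \eqref{cond} to bound $g$ --- is precisely the standard elementary argument one would expect, so there is nothing to compare.
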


Moreover, the following holds.
\begin{lema}\label{lema.estimareg}
Let $G$ be an Young function satisfying \eqref{cond}. Then, for $0<t<1$ it holds that
\begin{equation}\label{estimareg}
\frac{p^-}{p^+} g(a) t^{p^+-1} \le g(a t)\le \frac{p^+}{p^-} g(a) t^{p^- - 1}.
\end{equation}
\end{lema}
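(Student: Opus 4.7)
The plan is to exploit condition \eqref{cond} twice: first to control $G(at)$ by $G(a)$ via power functions of $t$, and then to pass from $G$ to $g$ using the bound $p^- G(s) \le s g(s) \le p^+ G(s)$ that is built into \eqref{cond}.

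For the first step, I would rewrite \eqref{cond} as
$$
p^- \le \frac{d \log G(s)}{d \log s} \le p^+,
$$
which is equivalent to saying that $G(s)/s^{p^-}$ is nondecreasing and $G(s)/s^{p^+}$ is nonincreasing on $(0,\infty)$. Applying this monotonicity at the two points $s = at$ and $s = a$, and using that $0<t<1$ forces $at < a$, gives
$$
t^{p^+} G(a) \le G(at) \le t^{p^-} G(a).
$$

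For the second step, \eqref{cond} yields directly
$$
\frac{p^-}{s} G(s) \le g(s) \le \frac{p^+}{s} G(s) \quad \text{for all } s>0.
$$
Applying these bounds at $s = at$, combining with the two-sided estimate on $G(at)$ just obtained, and finally rewriting $G(a)$ in terms of $g(a)$ via the same inequalities applied at $s=a$, one obtains
$$
g(at) \le \frac{p^+}{at} G(at) \le \frac{p^+}{at} t^{p^-} G(a) \le \frac{p^+}{at} t^{p^-} \cdot \frac{a g(a)}{p^-} = \frac{p^+}{p^-} g(a)\, t^{p^- -1},
$$
and symmetrically the corresponding lower bound involving $t^{p^+ - 1}$.

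I do not foresee a real obstacle: the proof is essentially a careful chaining of the defining inequality \eqref{cond}, and no additional regularity of $G$ beyond what the Young function framework already provides is needed. The only point that requires some care is keeping track of the direction of all inequalities under the assumption $0<t<1$, so that the monotonicity of $G(s)/s^{p^\pm}$ is applied consistently and the transition from $G$ to $g$ does not swap $p^-$ and $p^+$ at the wrong moment.
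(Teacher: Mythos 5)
Your proof is correct: the monotonicity of $G(s)/s^{p^-}$ and $G(s)/s^{p^+}$ does follow from \eqref{cond} (equivalently, by integrating $p^-/s\le g(s)/G(s)\le p^+/s$), and the chain $g(at)\le \frac{p^+}{at}G(at)\le \frac{p^+}{at}t^{p^-}G(a)\le \frac{p^+}{p^-}g(a)t^{p^--1}$, together with its mirror image, gives exactly \eqref{estimareg}; the only tacit assumption is $a>0$, the case $a=0$ being trivial. The paper itself omits the proof and refers to \cite{FBPLS}, and your argument is precisely this standard elementary one (the first step being the same computation that underlies Lemma \ref{poliG}), so there is nothing to add.
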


The proofs of Lemmas \ref{poliG} and \ref{lema.estimareg} are elementary. The interested reader can find a proof in \cite{FBPLS}.

The complementary function of an Young function $G$ is defined as
$$
G^*(a) := \sup\{at-G(t)\colon t>0\}.
$$
From this definition  the following Young-type inequality holds
\begin{equation}\label{young}
at\le G(t) + G^*(a)\quad \text{for every } a,t\ge 0.
\end{equation}

It is easy to deduce the identity,
\begin{equation}\label{igualdad}
G^*(g(t)) = tg(t) - G(t),
\end{equation}
for every $t>0$, see \cite[Lemma 2.9]{FBS}. Now \eqref{igualdad} and \eqref{cond} yield that
\begin{equation}\label{cond*}
(p^+)'\le \frac{t g^*(t)}{G^*(t)}\le (p^-)',
\end{equation}
where $g^*(t) = (G^*)'(t)$. Observe that \eqref{cond*} implies that $G^*$ verifies the $\Delta_2$ condition. See \cite[Theorem 4.1]{Krasnoselskii}.

\begin{rem}\label{reflexivo}
 Notice that indeed, \cite[Theorem 4.1]{Krasnoselskii} entails that \eqref{cond} is equivalent to the fact that $G$ and $G^*$ both satisfy the $\Delta_2$ condition.
\end{rem}

\subsection{General Fractional Orlicz-Sobolev spaces} \label{sec.functiones.m.n}

In this subsection we study some basic properties of the space  $W^{M,N,G}(\R^n)$ defined in the introduction.

The spaces $L^G(\R^n)$ and $W^{M,N,G}(\R^n)$ are naturally endowed with the so-called Luxemburg norms
$$
\|u\|_G =  \|u\|_{L^G(\R^n)} := \inf\left\{\lambda>0\colon \Phi_G\left(\frac{u}{\lambda}\right)\le 1\right\}
$$
and
$$
\|u\|_{M,N,G} =  \|u\|_{W^{M,N,G}(\R^n)} := \|u\|_G + [u]_{M,N,G},
$$
where
$$
[u]_{M,N,G} :=\inf\left\{\lambda>0\colon \Phi_{M,N,G}\left(\frac{u}{\lambda}\right)\le 1\right\}.
$$

Let $G$ be an Young function such that $G'=g$ and $M$, $N$ satisfying properties \eqref{P1}--\eqref{P3}. The fractional $g-$Laplacian is defined as the gradient of the modular $\Phi_{M,N,G}\colon W^{M,N,G}(\R^n)\to \R$ and denoted by $(-\Delta_g)^{M,N}$.

This  operator is well defined and continuous between $W^{M,N,G}(\R^n)$ and its dual space $(W^{M,N,G}(\R^n))'$. Moreover, it has the expression
\begin{equation} \label{fracg}
\langle (-\Delta_g)^{M,N} u,v \rangle =  \int_{\R^{2n}} g\left( |D_M u| \right)\frac{D_M u}{|D_M u|} D_M v \,d\mu_N
\end{equation}
for any $u,v\in W^{M,N,G}(\R^n)$, where the \emph{$M-$H\"older quotient} is defined as
$$ 
D_M u(x,y) = \frac{u(x)-u(y)}{M(|x-y|)}. 
$$
and the measure
$$ 
d \mu_N = \frac{dx \; dy}{N(|x-y|)} 
$$
in defined in $\R^{2n}=\R^n \times \R^n$.

We start with the following basic properties on the usual $W^{1,G}$ and $L^G$ spaces.
\begin{prop}[\cite{Adams}, Chapter 8] \label{propiedades}
Let $G$ be an Young function satisfying \eqref{cond}. Then the spaces $L^G(\R^n)$ and $W^{1,G}(\R^n)$ are reflexive, separable Banach spaces. Moreover, the dual space of $L^G(\R^n)$ can be identified with $L^{G^*}(\R^n)$. Finally, $C_c^\infty(\R^n)$ is dense in $L^{G}(\R^n)$ and in $W^{1,G}(\R^n)$.
\end{prop}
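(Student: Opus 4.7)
The plan is to leverage Remark \ref{reflexivo}, which guarantees that both $G$ and its complementary function $G^*$ satisfy the $\Delta_2$ condition. This is the key structural fact that allows the classical $L^p$ machinery to carry over to the Orlicz setting: under $\Delta_2$, modular convergence is equivalent to Luxemburg-norm convergence, so one can freely interchange the two whenever it is convenient. With this at hand, the four assertions reduce to variants of arguments already familiar from $L^p$ theory.

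First, I would show that $L^G(\R^n)$ is a Banach space. Using the normalization $G(1)=1$ together with Chebyshev's inequality in the form $\LL^n(\{|u|>\lambda\})\leq \Phi_G(u/\lambda)$, any $\|\cdot\|_G$-Cauchy sequence is Cauchy in measure, so a subsequence converges a.e.; Fatou applied to $G$ then produces the limit in $L^G$. Separability is obtained by truncating to a ball, then approximating by simple functions supported on cubes with rational vertices and rational values, with $\Delta_2$ upgrading modular approximation to norm approximation. For the duality $(L^G(\R^n))' = L^{G^*}(\R^n)$, one direction is immediate from the Young inequality \eqref{young}. For the converse, given $T\in (L^G)'$, the assignment $E\mapsto T(\chi_E)$ defines a signed measure absolutely continuous with respect to Lebesgue measure, yielding by Radon--Nikodym a density $v$; a variational testing against $u = g^{-1}(|v|)\,\text{sgn}(v)$, combined with the Young equality \eqref{igualdad}, pins down $v\in L^{G^*}$ with the correct norm. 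Reflexivity then follows by iteration: $(L^G)'' = (L^{G^*})' = L^{(G^*)^*} = L^G$, since $(G^*)^*=G$ for a continuous Young function.

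Density of $C_c^\infty(\R^n)$ in $L^G(\R^n)$ proceeds by the two standard steps, each of which is a modular statement upgraded through $\Delta_2$. Truncation $u\mapsto u\,\chi_{B_R}$ converges in modular by dominated convergence using $G(u)\in L^1$; mollification $\rho_\varepsilon * u \to u$ converges in modular by Jensen's inequality $G(\rho_\varepsilon * u)\leq \rho_\varepsilon * G(u)$, dominated convergence, and the fact that translations are continuous in $L^1$ for the majorant $G(u)$. The $\Delta_2$ hypothesis then promotes both to convergence in $\|\cdot\|_G$.

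The $W^{1,G}$-statements follow formally: the map $u\mapsto (u,\nabla u)$ is an isometric embedding of $W^{1,G}(\R^n)$ onto a closed subspace of $L^G(\R^n)^{n+1}$, so reflexivity and separability are inherited from the $L^G$-case. Density of $C_c^\infty$ in $W^{1,G}$ combines the same mollification (which commutes with distributional derivatives) with the cutoff argument, controlled via $|\nabla(\eta_R u)|\leq \eta_R|\nabla u| + |u||\nabla\eta_R|$. The step I expect to require the most care is the identification of the dual: making the Radon--Nikodym construction and the subsequent $L^{G^*}$-norm estimate fully rigorous on all of $\R^n$, rather than on bounded sets, requires an exhaustion argument that avoids any circular appeal to reflexivity of $L^G$ itself.
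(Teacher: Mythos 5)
The paper offers no proof of this proposition---it is quoted verbatim from the cited reference (Adams, Chapter 8)---and your outline reproduces exactly the standard textbook argument behind that citation: completeness via convergence in measure and Fatou, separability via simple functions upgraded through $\Delta_2$, duality via Radon--Nikodym plus the Young equality \eqref{igualdad} (using that both $G$ and $G^*$ are $\Delta_2$, as in Remark \ref{reflexivo}), reflexivity by iterating the duality, density by truncation and mollification, and the closed isometric embedding $u\mapsto(u,\nabla u)$ into $L^G(\R^n)^{n+1}$ for the $W^{1,G}$ statements. So the proposal is correct and follows essentially the same route as the source the paper relies on; the only details deserving care in a full write-up are the ones you already flag (the exhaustion in the duality step) plus the fact that $g$ need not be strictly increasing, so $g^{-1}$ must be read as a generalized inverse.
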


\noindent In order to deduce similar properties for our general fractional Orlicz-Sobolev, following \cite{Raman}, we observe that the mapping
$$
\mathcal{A}\colon W^{M,N,,G}(\R^n) \to X= L^G(\R^n) \times L^{G}(\R^{2n},\mu_N) 
$$
given by $\mathcal{A} = (u, D_M u)$ is an isometry. 

For later purpose, it is convenient to observe that the measure $\mu_N$ is invariant under the action of the diagonal translation operator:
$$ 
\tau_z \phi (x,y) = \phi (x-z,y-z). 
$$
Moreover, the $M-$H\"older quotient $D_M$ commutes with $\tau_z$, i.e.
$$
\tau_z D_M u (x,y) = D_M (\tau_z u) (x,y),
$$
where, as usual, $\tau_z u(x) = u(x-z)$.

\begin{prop}
$W^{M,N,G}(\R^n)$ is a separable Banach space. If furthermore, $G$ satisfies \eqref{cond}, it is reflexive.
\end{prop}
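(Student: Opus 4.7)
The plan is to transfer the desired properties from the model space $X = L^G(\R^n)\times L^G(\R^{2n},\mu_N)$ to $W^{M,N,G}(\R^n)$ via the isometry $\mathcal{A}(u)=(u,D_M u)$ introduced just above the statement. Proposition \ref{propiedades} already records that each factor of $X$ is separable, and Banach, and also reflexive whenever $G\in\Delta_2\cap\nabla_2$; the $\mu_N$-version is proved identically to the Lebesgue-measure case once one notes \eqref{P1}--\eqref{P2} guarantee $\mu_N$ is a $\sigma$-finite Borel measure. The product of two such spaces endowed with the sum norm then inherits being Banach and separable, and inherits reflexivity under \eqref{cond}.

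The first routine step is to check that $\|\cdot\|_{M,N,G}=\|\cdot\|_G+[\cdot]_{M,N,G}$ is indeed a norm; this is automatic since each summand is a Luxemburg norm, and $\mathcal{A}$ is clearly linear and preserves the sum of Luxemburg norms. Next I would establish completeness: given a Cauchy sequence $(u_k)\subset W^{M,N,G}(\R^n)$, the sequence $\mathcal{A}(u_k)$ is Cauchy in $X$, so by completeness of the factors there exist $u\in L^G(\R^n)$ and $w\in L^G(\R^{2n},\mu_N)$ with $u_k\to u$ in $L^G$ and $D_M u_k\to w$ in $L^G(\mu_N)$.

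The crucial identification is $w=D_M u$. For this I would pass to a subsequence (still denoted $(u_k)$) converging $\mathcal{L}^n$-a.e.\ on $\R^n$ and $\mu_N$-a.e.\ on $\R^{2n}$, which is possible because norm convergence in $L^G$ implies convergence in measure (using $\Delta_2$ and \eqref{cotaG1}--\eqref{cotaG2}). Since $M(|x-y|)>0$ for $x\neq y$ by \eqref{P1}, the pointwise a.e.\ convergence $u_k(x)\to u(x)$ and $u_k(y)\to u(y)$ forces $D_M u_k(x,y)\to D_M u(x,y)$ a.e., so $w=D_M u$ almost everywhere and hence $u\in W^{M,N,G}(\R^n)$ with $u_k\to u$ in norm. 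Equivalently, $\mathcal{A}(W^{M,N,G}(\R^n))$ is closed in $X$.

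Separability then follows immediately: a subset of a separable metric space is separable, so $\mathcal{A}(W^{M,N,G}(\R^n))$ is separable, and the isometry transports a countable dense subset back to $W^{M,N,G}(\R^n)$. For reflexivity under \eqref{cond}, Remark \ref{reflexivo} gives that both $G$ and $G^*$ satisfy $\Delta_2$, so $X$ is reflexive, and a closed subspace of a reflexive Banach space is reflexive. I expect the only nontrivial step to be the identification $w=D_M u$, i.e.\ extracting a pointwise-convergent subsequence compatible with both the $L^G(\R^n)$ and $L^G(\R^{2n},\mu_N)$ limits; everything else is a bookkeeping transfer along $\mathcal{A}$.
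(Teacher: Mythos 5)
Your argument is correct and is essentially the paper's own proof: both embed $W^{M,N,G}(\R^n)$ isometrically into $X=L^G(\R^n)\times L^G(\R^{2n},\mu_N)$ via $\mathcal{A}(u)=(u,D_M u)$, show the image is closed by extracting an a.e.\ convergent subsequence to identify the limit $\phi$ with $D_M u$, and then transfer completeness, separability and (under \eqref{cond}) reflexivity from $X$. Your additional remarks (convergence in measure, $M>0$ off the diagonal, closed subspaces of reflexive spaces) simply spell out details the paper leaves implicit.
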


\begin{proof}
The proposition will follows if we show that the image of $\mathcal{A}$ is closed, hence $W^{M,N,G}(\R^n)$ is isometrically isomorphic to a closed subspace of the reflexive Banach space $X$ and the result follows.

In order to do so, assume that $(u_k,D_M u_k) \to (u,\phi)$ in  $X$. Then passing to a subsequence, we may assume that  $u_k \to u$ and $D_M u_k \to \phi$ a. e. 

Hence $D_M u_k \to D_M u$ a.e., whence $\phi= D_M u$.
\end{proof}

We prove that the general fractional Orlicz Sobolev space contains $W^{1,G}(\Omega)$ as a subspace.

\begin{lema} \label{teo1}
Let $u\in W^{1,G}(\R^n)$. Then, for $0<s<1$ it holds that
$$
\Phi_{M,N,G}(u) \leq C\left( \Phi_{G}(|\nabla u|)  +  \Phi_{G}(u)\right) 
$$
where $C$ depends on $G$, $M$, $N$ and $n$.
\end{lema}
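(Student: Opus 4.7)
The plan is to split the double integral defining $\Phi_{M,N,G}(u)$ according to whether $|x-y|<1$ or $|x-y|\ge 1$, writing
\begin{equation*}
\Phi_{M,N,G}(u)=I_1+I_2,\qquad I_1=\int\!\!\int_{|x-y|<1}\cdots,\qquad I_2=\int\!\!\int_{|x-y|\ge 1}\cdots,
\end{equation*}
and to estimate each piece using two tools: the homogeneity bound $G(s\tau)\le s^{p^-}G(\tau)$ valid for $0<s\le 1$ and $\tau\ge 0$ (which follows from \eqref{cond} since $t\mapsto G(t)/t^{p^-}$ is nondecreasing, and is essentially the content of Lemma \ref{lema.estimareg}), and the $\Delta_2$ inequality $G(|a|+|b|)\le \C(G(a)+G(b))$. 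Condition \eqref{P2} will enter by providing the pointwise lower bounds $M(r)\ge r$ for $r\le 1$ and $M(r)\ge 1$ for $r\ge 1$, which guarantee that the scaling factor is at most $1$ in the appropriate region.

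For the far piece $I_2$, on $\{|x-y|\ge 1\}$ one has $M(|x-y|)\ge 1$, hence the scaling inequality applied with $s=1/M(|x-y|)$ together with $\Delta_2$ gives
\begin{equation*}
G\!\left(\frac{u(x)-u(y)}{M(|x-y|)}\right)\le \frac{G(|u(x)-u(y)|)}{M(|x-y|)^{p^-}}\le \C\,\frac{G(|u(x)|)+G(|u(y)|)}{M(|x-y|)^{p^-}}.
\end{equation*}
Fubini, symmetry in $x,y$, and the change of variable $h=x-y$ then yield $I_2\le 2\C\,\omega_{n-1}\Phi_G(u)\int_1^\infty r^{n-1}/(M(r)^{p^-}N(r))\,dr$, finite by the second integral in \eqref{P3}.

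For the near piece $I_1$, I would first assume $u\in C_c^\infty(\R^n)$, extending afterwards by the density contained in Proposition \ref{propiedades} together with Fatou's lemma. Writing $u(x)-u(y)=\int_0^1 \nabla u(y+t(x-y))\cdot(x-y)\,dt$, since $|x-y|<1$ condition \eqref{P2} gives $M(|x-y|)\ge |x-y|$, so $s:=|x-y|/M(|x-y|)\le 1$. Using the scaling inequality with this $s$ and Jensen's inequality for the convex $G$ produces
\begin{equation*}
G\!\left(\frac{u(x)-u(y)}{M(|x-y|)}\right)\le \left(\frac{|x-y|}{M(|x-y|)}\right)^{\!p^-}\!\int_0^1 G(|\nabla u(y+t(x-y))|)\,dt.
\end{equation*}
Setting $h=x-y$, applying Fubini and then the translation $y\mapsto y+th$ (which preserves Lebesgue measure) collapses the $y$- and $t$-integrals to $\Phi_G(|\nabla u|)$, while the remaining $h$-integral, in polar coordinates, equals $\omega_{n-1}\int_0^1 r^{n-1+p^-}/(M(r)^{p^-}N(r))\,dr$, finite by the first integral in \eqref{P3}. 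Adding the two estimates produces the claimed inequality with $C$ depending on $\C$, $n$, $p^-$, and the two integrals in \eqref{P3}.

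The argument is essentially elementary, so the main obstacle is really bookkeeping: verifying that in both regions the correct exponent is $p^-$ (rather than $p^+$), which is forced by the fact that the scaling factor $s$ is at most $1$ in each case, and checking that the two geometric integrals produced by the splitting match precisely the two conditions in \eqref{P3}. A subtler point is the passage from smooth $u$ to general $u\in W^{1,G}(\R^n)$ in the bound for $I_1$; this is handled by approximating $u$ in $W^{1,G}$-norm (so both $\Phi_G(u_k)$ and $\Phi_G(|\nabla u_k|)$ converge), passing to an a.e.\ convergent subsequence, and invoking Fatou's lemma on the left-hand side.
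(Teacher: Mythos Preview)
Your proof is correct and follows essentially the same route as the paper's: the same near/far splitting at $|x-y|=1$, the mean-value formula combined with Jensen and the scaling bound $G(s\tau)\le s^{p^-}G(\tau)$ (for $0<s\le 1$) on the near part, the $\Delta_2$ condition on the far part, and the same extension from smooth compactly supported functions to $W^{1,G}(\R^n)$ via density and Fatou. The only cosmetic differences are the order in which you treat $I_1$ and $I_2$, and that the paper applies Jensen before pulling out the factor $(|h|/M(|h|))^{p^-}$ whereas you do it in the reverse order; both orderings are valid.
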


\begin{proof}
Let us first assume that $u\in C^2_c(\R^n)$.

We split the integral
\begin{align*}
 \int_{\R^{2n}} &G\left( \frac{|u(x)-u(y)|}{M(|x-y|)} \right) \frac{dx\,dy}{N(|x-y|)}\\ &=
 \left(\int_{B_1}\intr + \int_{B_1^c} \intr \right)  G\left( \frac{|u(x+h)-u(x)|}{M(|h|)} \right) \frac{dx\,dh}{N(|h|)} \\
 &:=I_1+I_2.
\end{align*}

Let us bound $I_1$. Given $u\in C^2_c(\R^n)$, observe that for any fixed $x\in \R^n$ and $h\in\R^n$ we can write
$$
 u(x+h)-u(x) = \int_0^1 \frac{d}{dt} u(x+th) \,dt = \int_0^1 \nabla u(x+th)\cdot h \,dt.
$$
Dividing by $M(|h|)$ and using the monotonicity and   convexity of $G$ we get
\begin{align} \label{ec.lema.1}
\begin{split}
 G\left( \frac{|u(x+h)-u(x)|}{M(|h|)} \right) 
 &\leq 
 G \left(  \int_0^1 |\nabla u(x+th)| \frac{|h|}{M(|h|)} \,dt \right)\\
 &\leq
    \int_0^1 G \left(|\nabla u(x+th)| \frac{|h|}{M(|h|)} \right)dt.  
\end{split}    
\end{align}
Expression \eqref{ec.lema.1} together with \eqref{cond}, \eqref{P2} and \eqref{P3} allow us to bound $I_1$ as follows
\begin{align*}
I_1&\leq \int_{B_1 }\intr \int_0^1 G \left(|\nabla u(x+th)| \frac{|h|}{M(|h|)} \right)dt\, dx \,\frac{dh}{N(|h|)}\\
&\leq \int_{B_1 } \frac{|h|^{p^-}}{N(|h|)M(|h|)^{p^-}}\intr \int_0^1 G \left(|\nabla u(x+th)|\right)dt\, dx \,dh\\
&\leq n\omega_n \int_0^1   \frac{r^{p^-+n-1}}{N(r)M(r)^{p^-}} \,dr    \intr  G \left(|\nabla u(x)|\right)  dx.
\end{align*}

The integral $I_2$ can be bounded using again \eqref{cond}, \eqref{P2} and \eqref{P3}. Indeed, 
\begin{align*}
 I_2&\leq \int_{B_1^c}\frac{1}{N(|h|)M(|h|)^{p^-} } \intr G(|u(x+h)-u(x)|) \,dx \,dh\\
&\leq \C  \int_{B_1^c}\frac{1}{N(|h|)M(|h|)^{p^-} } \intr G(|u(x+h)|) + G(|u(x)|) \,dx \,dh\\ 
&\leq 2n\omega_n \C \int_1^\infty \frac{r^{n-1}\,dr}{N(r) M(r)^{p^-} }  \intr G(|u(x)|)\,dx.
\end{align*}

In order to prove the Lemma for any $u\in W^{1,G}(\R^n)$, we take a sequence $\{u_k\}_{k\in\N}\subset C^2_c(\R^n)$ such that $u_k\to u$ in $W^{1,G}(\R^n)$. Without loss of generality, we may assume that $u_k\to u$ a.e. in $\R^n$. Observe that this implies that
$$
G\left(\frac{|u_k(x)-u_k(y)|}{M(|x-y|)}\right)\to G\left(\frac{|u(x)-u(y)|}{M(|x-y|)}\right) \quad \text{a.e. in } \R^n\times\R^n.
$$

Therefore, by   Fatou's Lemma, we obtain that
\begin{align*}
\Phi_{M,N,G}(u)&\le \liminf_{k\to\infty} \Phi_{M,N,G}(u_k)\\
& \le \lim_{k\to\infty} C\left(\Phi_{G}(|\nabla u_k|) + \Phi_{G}(u_k)\right)
=  C\left(\Phi_{G}(|\nabla u|) + \Phi_{G}(u) \right).
\end{align*} 
The proof is now complete.
\end{proof}

We finish this section stating a Poincar\'e type inequality for functions in $W^{M,N,G}_0(\Omega)$ from where we conclude  that $[\,\cdot\,]_{M,N,G}$ is an equivalent norm to $\|\cdot\|_{M,N,G}$ in $W^{M,N,G}_0(\Omega)$.

The proof is completely analogous to \cite[Theorem 2.12]{FBPLS} and it is omitted.
\begin{thm}\label{thm.poincare}
Let $\Omega\subset \R^n$ be open and bounded. Then, there exists a constant $C$ depending on $M, N$ and the diameter of $\Omega$ such that 
$$
\Phi_G(u) \le  \Phi_{M,N,G}(C u),
$$
for every $u\in W^{M,N,G}_0(\Omega)$.
\end{thm}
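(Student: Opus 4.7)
The plan is to exploit the vanishing of $u$ outside $\Omega$ by restricting the double integral that defines $\Phi_{M,N,G}(Cu)$ to a product $\Omega \times \tilde B$, where $\tilde B$ will be a ball lying in $\R^n\setminus\overline\Omega$ comparable in size and distance to $\Omega$. Concretely, let $d$ be the diameter of $\Omega$; after a harmless translation I assume $\Omega\subset B(0,d)$, and set $\tilde B = B(y_0,d)$ for any $y_0\in\R^n$ with $|y_0|=3d$. For $x\in\Omega$ and $y\in\tilde B$ one has $u(y)=0$ a.e., so $|u(x)-u(y)|=|u(x)|$, and the distance is pinched into $d\le |x-y|\le 5d$.

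The key estimate comes from keeping only the contribution from $\Omega\times\tilde B$ and using the monotonicity of $M$ and $N$ from \eqref{P1} to bound the denominators from above:
\begin{align*}
\Phi_{M,N,G}(Cu)
&\ge \int_\Omega \int_{\tilde B} G\!\left(\frac{C\,|u(x)|}{M(|x-y|)}\right) \frac{dy\,dx}{N(|x-y|)} \\
&\ge \frac{|\tilde B|}{N(5d)} \int_\Omega G\!\left(\frac{C\,|u(x)|}{M(5d)}\right) dx.
\end{align*}
Here I use that $G$ is increasing (so replacing $M(|x-y|)$ by $M(5d)$ in the denominator only decreases the argument) together with the pointwise bound $1/N(|x-y|)\ge 1/N(5d)$. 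It then remains to absorb the prefactor into the argument of $G$ by taking $C$ sufficiently large.

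Because $G$ is not homogeneous, direct scaling is unavailable, so I will lean on the polynomial lower bound $G(\alpha t)\ge \alpha^{p^-} G(t)$ valid for all $\alpha\ge 1$ and $t>0$. This follows at once by integrating $g(s)/G(s)\ge p^-/s$ between $t$ and $\alpha t$, which is precisely the lower bound in \eqref{cond}, and is implicit in Lemma \ref{poliG}. Applied with $\alpha = C/M(5d)$ it yields
$$
\Phi_{M,N,G}(Cu) \ge \frac{|\tilde B|}{N(5d)}\left(\frac{C}{M(5d)}\right)^{p^-}\Phi_G(u),
$$
so the desired Poincar\'e inequality $\Phi_G(u)\le \Phi_{M,N,G}(Cu)$ holds as soon as
$$
C \ge M(5d)\,\max\!\left\{1,\,\left(\frac{N(5d)}{|\tilde B|}\right)^{1/p^-}\right\},
$$
which is an explicit constant depending only on $M$, $N$, $n$, $p^-$ and $d$. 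The only genuinely delicate point is the non-homogeneity of $G$; once the polynomial estimate from \eqref{cond} is at hand, all remaining steps are pointwise monotonicity manipulations and no density or approximation argument is required.
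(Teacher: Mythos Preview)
Your argument is correct. The paper itself omits the proof of this theorem, deferring to \cite[Theorem 2.12]{FBPLS}, so there is no in-paper proof to compare against; your approach --- restricting the double integral to $\Omega\times\tilde B$ with $\tilde B$ a fixed exterior ball at distance comparable to $d$, then using the monotonicity of $M,N$ from \eqref{P1} and the scaling estimate $G(\alpha t)\ge\alpha^{p^-}G(t)$ from \eqref{cond} --- is exactly the standard route for Poincar\'e inequalities in nonlocal settings and goes through cleanly here. One cosmetic remark: your constant $C$ visibly depends also on $n$ and $p^-$ (through $|\tilde B|=\omega_n d^n$ and the exponent), which the statement does not list explicitly; this is a harmless imprecision in the paper's formulation rather than a flaw in your argument.
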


As a corollary we infer the following Poincar\'e's  inequality for fractional Luxemburg type norms. The proof is identical to that of \cite[Corollary 2.13]{FBPLS}.
\begin{cor} \label{poincare.norma}
Let $\Omega\subset \R^N$ be open and bounded. Then
$$
\|u\|_{G} \leq C [u]_{M,N,G}
$$
for every $0<s<1$ and $u\in W^{M,N,G}_0(\Omega)$, where $C$ depends on the diameter of $\Omega$, $n,  p^+$ and $p^-$.
\end{cor}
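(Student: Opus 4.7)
The plan is to derive the norm inequality directly from the modular Poincaré inequality of Theorem \ref{thm.poincare} by homogenizing through the Luxemburg normalization. The key elementary observation is that, because $G$ satisfies \eqref{cond} and hence the $\Delta_2$ condition (see Remark \ref{reflexivo}), the modular $\Phi_{M,N,G}$ is continuous along rays $\lambda \mapsto \Phi_{M,N,G}(u/\lambda)$, so that the infimum defining $[u]_{M,N,G}$ is attained in the sense that $\Phi_{M,N,G}\bigl(u/[u]_{M,N,G}\bigr) \le 1$ whenever $[u]_{M,N,G} > 0$.

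First I would dispose of the trivial case $[u]_{M,N,G}=0$, in which one easily checks (again using $\Delta_2$) that $u=0$ a.e.\ and there is nothing to prove. Set $\lambda := [u]_{M,N,G} > 0$ and let $C$ be the constant from Theorem \ref{thm.poincare}, which depends only on $M$, $N$ and the diameter of $\Omega$. Applying Theorem \ref{thm.poincare} to the function $u/(C\lambda) \in W^{M,N,G}_0(\Omega)$ yields
$$
\Phi_G\!\left(\frac{u}{C\lambda}\right)
\;\le\; \Phi_{M,N,G}\!\left(C\cdot\frac{u}{C\lambda}\right)
\;=\; \Phi_{M,N,G}\!\left(\frac{u}{\lambda}\right) \;\le\; 1.
$$

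By the definition of the Luxemburg norm, this inequality immediately gives $\|u\|_G \le C\lambda = C [u]_{M,N,G}$, which is the desired conclusion. The constant $C$ inherits its dependence on the diameter of $\Omega$, $n$, $p^+$ and $p^-$ from Theorem \ref{thm.poincare} and the growth estimates in Lemma \ref{poliG}.

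The only subtlety — and the reason the argument is not entirely tautological — is the use of $\Phi_{M,N,G}(u/\lambda) \le 1$ at the critical level $\lambda = [u]_{M,N,G}$. This is standard for Orlicz-type modulars under $\Delta_2$, but it must be invoked (rather than the strict inequality one would get from choosing $\lambda$ slightly larger than $[u]_{M,N,G}$). If one prefers to avoid this point, the alternative is to apply the same chain of inequalities with $\lambda$ replaced by $\lambda(1+\varepsilon)$, deduce $\|u\|_G \le C(1+\varepsilon)[u]_{M,N,G}$, and let $\varepsilon\downarrow 0$; this bypasses the attainment issue completely and still yields the claim with the same constant.
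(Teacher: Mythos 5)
Your argument is correct: normalizing by $\lambda=[u]_{M,N,G}$, applying the modular Poincar\'e inequality of Theorem \ref{thm.poincare} to $u/(C\lambda)$, and invoking the definition of the Luxemburg norm (with the attainment issue handled either by Fatou-type continuity or by your $\varepsilon$-limiting variant) is exactly the standard deduction that the paper has in mind when it refers the proof to \cite[Corollary 2.13]{FBPLS}. So the proposal is sound and follows essentially the same route as the paper's (omitted) proof.
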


\subsection{Examples} 
\label{subsection-examples}
\begin{itemize}
\item When $M(r)=r^s$, $0<s<1$ and $N(r)=r^n$, $n\geq 1$ we obtain the fractional Orlicz-Sobolev spaces defined in \cite{FBS}. Indeed, \eqref{P1} and \eqref{P2} easily hold and \eqref{P3} is fulfilled since 
$$
\int_0^1 \frac{r^{n-1+p^-}}{N(r)M(r)^{p^-}} \,dr =\frac{1 }{p^-(1-s)}, \qquad    \int_1^\infty \frac{r^{n-1}}{N(r)M(r)^{p^-}}\,dr =  \frac{1}{sp^-}.
$$

\item When $M(r)=r$ and $N(r)=r^{n-1}$, $n\geq 1$ we obtain the Orlicz-Slobodetskii spaces defined in \cite{KK}. Indeed, \eqref{P1} and \eqref{P2} easily hold and \eqref{P3} reads as
$$
  \int_0^1 \frac{r^{n-1+p^-}}{N(r)M(r)^{p^-}} \,dr =1 , \qquad    \int_1^\infty \frac{r^{n-1}}{N(r)M(r)^{p^-}}\,dr =  \frac{1}{p^--1}.
$$

\item When $M(r)=r^s (1+|\log r|)^\beta$, $\beta>0$,  and $N(r)=r^n$  we obtain the family of weighted Besov Spaces considered in \cite{A}. Indeed, \eqref{P1} and \eqref{P2} easily hold and \eqref{P3} is fulfilled since 
$$
\int_0^1 \frac{r^{n-1+p^-}}{N(r)M(r)^{p^-}} \,dr \leq \int_0^1 r^{p^-(1-s)-1}\,dr =\frac{1 }{p^-(1-s)}   
$$
$$
\int_1^\infty \frac{r^{n-1}}{N(r)M(r)^{p^-}}\,dr \leq  \int_1^\infty r^{-1-sp^-}\, dr =  \frac{1}{sp^-}
$$
 
\item When $M(r)=r^sG^{-1}(r^n)$ and $N(r)=1$, $n\geq 1$ we obtain the Orlicz-Slobodetskii  spaces defined in \cite{ABS}.  Indeed, \eqref{P1} and \eqref{P2} easily hold and \eqref{P3} reads as
$$
\int_0^1 \frac{r^{n-1+p^-}}{N(r)M(r)^{p^-}} \,dr \leq \int_0^1 r^{p^-(1-s)-1}\,dr =\frac{1 }{p^-(1-s)},   
$$
$$
\int_1^\infty \frac{r^{n-1}}{N(r)M(r)^{p^-}}\,dr \leq  \int_1^\infty r^{n\left(1-\frac{p^-}{p^+}\right) -1-sp^-}\, dr <\infty
$$
if and only if 
$$
\frac{n}{s}< \frac{(p^-)^2}{p^+-p^-},
$$
where we have used that $G^{-1}(r)\ge \min\{r^{1/p^+}, r^{1/p^-}\}$.
\end{itemize}
 
\subsection{The density theorem} \label{sec.density}

In this subsection we show that test functions are dense in $W^{M,N,G}(\R^n)$. Even though we use the standard method of truncation and regularization, we remark that there are some subtle technicalities in the argument which lead us to write them down in detail. Indeed, a detailed proof seems to be missed in the previous works on the subject in the literature.

\begin{prop}\label{prop.test.densas}
Let $G$ be an Young function satisfying \eqref{cond} and $M$ and $N$ fulfilling \eqref{P1}--\eqref{P3}. 
Then $C_c^\infty(\R^n)$ is dense in $W^{M,N,G}(\R^n)$.
\end{prop}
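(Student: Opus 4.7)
My plan is the classical two-step scheme: first truncate $u$ by a smooth cut-off to reduce to the compactly supported case, and then mollify. Since $G$ and $G^*$ both satisfy the $\Delta_2$ condition (Remark \ref{reflexivo}), Luxemburg-norm convergence is equivalent to modular convergence both in $L^G(\R^n)$ and in $L^G(\R^{2n},\mu_N)$, so it suffices to produce $\phi_k\in C_c^\infty(\R^n)$ with $\Phi_G(u-\phi_k)\to 0$ and $\Phi_{M,N,G}(u-\phi_k)\to 0$.

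\textbf{Truncation.} Fix $\eta\in C_c^\infty(\R^n)$ with $\eta\equiv 1$ on $B_1$, $\supp\eta\subset B_2$, and set $\eta_R(x)=\eta(x/R)$. The identity
\[
(u-\eta_R u)(x) - (u-\eta_R u)(y) = (1-\eta_R(x))\bigl(u(x)-u(y)\bigr) + u(y)\bigl(\eta_R(y)-\eta_R(x)\bigr),
\]
combined with $\Delta_2$, bounds $\Phi_{M,N,G}(u-\eta_R u)$ by a constant times the two pieces (A) $\int G((1-\eta_R(x))|D_M u|)\,d\mu_N$ and (B) $\int G(|u(y)||D_M\eta_R|)\,d\mu_N$. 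Term (A) is dominated by $G(|D_M u|)\in L^1(\mu_N)$ and vanishes pointwise, so DCT closes it. For (B) I would split at $|x-y|=1$: on $\{|x-y|\le 1\}$ use the Lipschitz bound $|D_M\eta_R|\le (C/R)|x-y|/M(|x-y|)$ together with $|x-y|/M(|x-y|)\le 1$ from \eqref{P2} and the Orlicz scaling $G(\alpha t)\le \alpha^{p^-}G(t)$ for $\alpha\le 1$ to get a bound of order $R^{-p^-}\Phi_G(u)\int_0^1 r^{n-1+p^-}/(N(r)M(r)^{p^-})\,dr$, finite by the first half of \eqref{P3}; on $\{|x-y|>1\}$ use instead the crude bound $|D_M\eta_R|\le 2/M(|x-y|)$ together with $G(t/M)\le G(t)/M^{p^-}$ (valid because $M(|x-y|)\ge 1$ on this set) to produce an $R$-independent dominator integrable by the second half of \eqref{P3}, and apply DCT. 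The companion $\Phi_G(u-\eta_R u)\to 0$ is immediate by DCT.

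\textbf{Mollification.} For a compactly supported $w\in W^{M,N,G}(\R^n)$ and a standard mollifier $\rho_\ve$, set $w_\ve=\rho_\ve*w\in C_c^\infty(\R^n)$. Jensen's inequality, Fubini, the diagonal translation invariance of $\mu_N$, and the commutation $\tau_z D_M = D_M\tau_z$ noted in Section \ref{sec.functiones.m.n} yield
\[
\Phi_{M,N,G}(w_\ve - w)\le \int_{\R^n}\rho_\ve(z)\int_{\R^{2n}}G(|\tau_z D_M w - D_M w|)\,d\mu_N\,dz.
\]
It then suffices to establish modular continuity of the diagonal translation $\tau_z$ in $L^G(\R^{2n},\mu_N)$ at $f=D_M w$; this follows by approximating $f$ in modular by continuous compactly supported functions (standard, since $\mu_N$ is a Radon measure and $\Delta_2$ holds), where translation continuity is trivial via uniform continuity and DCT. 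An analogous but simpler calculation gives $\Phi_G(w_\ve - w)\to 0$.

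\textbf{Main obstacle.} The genuinely delicate ingredient is term (B) in the truncation: the Lipschitz bound is $R^{-1}$-small but unbounded as $|x-y|\to\infty$, while the crude bound is tame in $|x-y|$ but carries no $R$-smallness. Reconciling the two by splitting at $|x-y|=1$ and using both halves of \eqref{P3} in tandem with the correct Orlicz scaling is apparently the ``subtle technicality'' the authors wish to record; the mollification step, by contrast, is a clean consequence of the diagonal translation invariance of $\mu_N$ and Jensen.
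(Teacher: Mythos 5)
Your proposal is correct and follows essentially the same route as the paper: truncation with the cut-off handled by splitting at $|x-y|=1$ and using both halves of \eqref{P3} (exactly the paper's Lemma on truncated functions), plus mollification controlled by Jensen's inequality and the diagonal-translation invariance of $\mu_N$, with the approximation by $C_c(\R^{2n}_*)$ functions (Lusin, $\mu_N$ Radon away from the diagonal) as the underlying technical lemma. The only cosmetic differences are the order of the two steps and that you phrase the mollification limit as continuity of diagonal translations in $L^G(\R^{2n},\mu_N)$, whereas the paper uses the contraction $\Phi_{M,N,G}((\phi-\psi)_\ve)\le\Phi_{M,N,G}(\phi-\psi)$ in a three-term triangle argument.
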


As usual, we denote by $\rho\in C^\infty_c(\R^n)$ the standard mollifier with $\supp(\rho)=B_1(0)$ and $\rho_\ve(x)=\ve^{-n}\rho(\tfrac{x}{\ve})$ is the approximation of the identity. It follows that $\{\rho_\ve\}_{\ve>0}$ is a family of positive functions satisfying 
$$
\rho_\ve\in C_c^\infty(\R^n), \quad \supp(\rho_\ve)=B_\ve(0), \quad \intr \rho_\ve\,dx=1.
$$
Given $u\in L^G(\R^n)$ we define the regularized functions $u_\ve\in L^G(\R^n)\cap C^\infty(\R^n)$ as
\begin{equation} \label{regularizada}
u_\ve(x)=u*\rho_\ve(x).
\end{equation}

Moreover, given $\phi\in L^G(\R^{2n}, d\mu_N)$, we define 
\begin{equation}\label{regularizada.phi}
\phi_\ve(x,y) := \intr \phi(x-z,y-z)\rho_\ve(z)\, dz,
\end{equation}
and observe that if $u\in W^{M,N,G}(\R^n)$, then $D_M u_\ve(x,y) = (D_M u)_\ve(x,y)$. Also, if $\phi, \psi\in L^G(R^{2n}, d\mu_N)$, then $(\phi + \psi)_\ve = \phi_\ve + \psi_\ve$.

In this context we prove the following useful estimate on regularized functions.
\begin{lema} \label{lema.reg}
Let $\phi \in L^G(\R^{2n}, d\mu_N)$ and $\{\phi_\ve\}_{\ve>0}$ be the family defined in \eqref{regularizada.phi}. Then
$$
\int_{\R^{2n}} G(|\phi_\ve|)\, d\mu_N \leq  \int_{\R^{2n}} G(|\phi|)\, d\mu_N
$$
for all $\ve>0$.
\end{lema}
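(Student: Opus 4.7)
The plan is to combine Jensen's inequality (since $G$ is convex and $\rho_\ve$ has mass one), Tonelli's theorem, and the diagonal translation invariance of the measure $d\mu_N$, which was observed right before the proposition: the substitution $(x,y)\mapsto(x-z,y-z)$ leaves $|x-y|$ unchanged, hence leaves $d\mu_N$ unchanged.

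First, since $\rho_\ve\ge 0$, $\int \rho_\ve\,dz=1$, and $G$ is even and convex, Jensen's inequality applied pointwise in $(x,y)$ gives
$$
G(|\phi_\ve(x,y)|)=G\Bigl(\Bigl|\intr \phi(x-z,y-z)\rho_\ve(z)\,dz\Bigr|\Bigr)\le \intr G(|\phi(x-z,y-z)|)\,\rho_\ve(z)\,dz.
$$
Next I would integrate this inequality against $d\mu_N$ over $\R^{2n}$ and apply Tonelli's theorem (the integrand is nonnegative, so Tonelli applies without any integrability assumption) to swap the $z$-integral with the $(x,y)$-integral:
$$
\int_{\R^{2n}}G(|\phi_\ve(x,y)|)\,d\mu_N\le \intr \rho_\ve(z)\left(\int_{\R^{2n}}G(|\phi(x-z,y-z)|)\,\frac{dx\,dy}{N(|x-y|)}\right)dz.
$$

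Then I would use the change of variables $u=x-z$, $v=y-z$ in the inner integral. Since $|u-v|=|x-y|$, the measure $d\mu_N$ is preserved, so the inner integral equals $\int_{\R^{2n}} G(|\phi(u,v)|)\,d\mu_N$, which is independent of $z$. Pulling this constant out and using $\int \rho_\ve\,dz=1$ yields the desired bound. There is no serious obstacle here; the only point to be careful about is invoking Tonelli rather than Fubini so that no a priori integrability of $G(|\phi_\ve|)$ is needed, which is convenient because the conclusion of the lemma is precisely what gives this integrability at the end.
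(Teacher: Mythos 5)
Your proposal is correct and follows essentially the same route as the paper: Jensen's inequality pointwise, then an interchange of the $z$- and $(x,y)$-integrations (Tonelli, justified by nonnegativity), and finally the invariance of $d\mu_N$ under the diagonal translation together with $\intr\rho_\ve\,dz=1$. Your explicit remark that Tonelli, not Fubini, is what is needed is a nice point of care, but the argument is the paper's argument.
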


\begin{proof}
By Jensen's inequality
\begin{align*}
 G\left( |\phi_\ve(x,y)|  \right) &=  G\left( \left|\intr \phi(x-z, y-z) \rho_\ve(z)\,dz \right| \right)\\
 &\leq  \intr  G\left( |\phi(x-z, y-z)|\right) \rho_\ve(z)\,dz.
\end{align*}

Integrating the last inequality over the whole $\R^{2n}$ with respect to the measure $\mu_N$ we get
\begin{align*}
\int_{\R^{2n}} G\left( |\phi_\ve(x, y)|\right) d\mu_N(x,y) &\leq 
\int_{\R^{2n}} \left\{ \intr  G\left( |\phi(x-z, y-z)|\right) \rho_\ve(z)\,dz \right\}  d\mu_N(x,y) \\
&= \intr \left\{ \int_{\R^{2n}}  G\left(|\phi(x-z, y-z)|\right) d\mu_N(x,y) \right\} \rho_\ve(z)\,dz   \\
&=
\int_{\R^{2n}}  G\left(|\phi(x, y)|\right) d\mu_N(x,y),
\end{align*}
where we have used the invariance of the measure $d\mu_N$ with respect to the action of $\tau_z$ and the fact that $\intr \rho \,dz =1$.
\end{proof}
As an immediate corollary, we obtain:
\begin{cor}
Let $u\in W^{M,N,G}(\R^n)$. Then
$$
\Phi_{M,N,G}(u_\ve)\le \Phi_{M,N,G}(u).
$$
\end{cor}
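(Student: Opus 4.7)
The estimate is an immediate consequence of Lemma \ref{lema.reg} once we identify the correct test element of $L^G(\R^{2n}, d\mu_N)$. The plan is as follows.

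First, since $u\in W^{M,N,G}(\R^n)$, by definition the $M$-H\"older quotient $\phi:=D_M u$ belongs to $L^G(\R^{2n},d\mu_N)$, with
$$
\int_{\R^{2n}} G(|\phi|)\, d\mu_N = \Phi_{M,N,G}(u).
$$
Next, I would invoke the commutation relation $D_M u_\ve = (D_M u)_\ve = \phi_\ve$, which was recorded just after the definition of $\phi_\ve$ in \eqref{regularizada.phi} and which follows from the linearity of $D_M$ together with the diagonal translation invariance of the mollification. Consequently,
$$
\Phi_{M,N,G}(u_\ve) = \int_{\R^{2n}} G(|D_M u_\ve|)\, d\mu_N = \int_{\R^{2n}} G(|\phi_\ve|)\, d\mu_N.
$$

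Finally, applying Lemma \ref{lema.reg} to $\phi = D_M u$ yields
$$
\int_{\R^{2n}} G(|\phi_\ve|)\, d\mu_N \leq \int_{\R^{2n}} G(|\phi|)\, d\mu_N,
$$
which combined with the two displays above gives exactly $\Phi_{M,N,G}(u_\ve)\le \Phi_{M,N,G}(u)$.

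There is essentially no obstacle here: the whole content of the corollary is packaged in Lemma \ref{lema.reg}, and the only thing to be careful about is that $D_M$ commutes with the diagonal convolution, which is precisely what the translation-invariance of $\mu_N$ used in the lemma's proof guarantees.
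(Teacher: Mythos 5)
Your proof is correct and is exactly the argument the paper intends: identify $\phi=D_M u$, use the commutation $D_M u_\ve=(D_M u)_\ve$ noted after \eqref{regularizada.phi}, and apply Lemma \ref{lema.reg}. This matches the paper's (implicit) proof of the corollary.
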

\medskip
We set:
$$ 
\R^{2n}_* = \{ (x,y) \in \R^{2n} \colon x \neq y \}.
$$

By the change of variable $h=x-y$ and using polar coordinates,  we see that \eqref{P1} implies that $\mu_N(K)< \infty$ for every compact set $K \subset \R^{2n}_*$. This means that $\mu_N$ is a Radon measure on $\R^{2n}_*$. Note that in general this is not true for compact subsets of $\R^{2n}$.

\begin{lema}
$L^G(\R^{2n},\mu_N)\cap L^\infty(\R^{2n})$ is dense in $L^G(\R^{2n},\mu_N)$.
\end{lema}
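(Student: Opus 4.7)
The plan is to approximate any $\phi \in L^G(\R^{2n},\mu_N)$ by its truncations and use dominated convergence on the modular, then pass from modular convergence to norm convergence via the $\Delta_2$ condition.

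First, given $\phi \in L^G(\R^{2n},\mu_N)$, I would define $\phi_k(x,y) := \phi(x,y)\,\mathbf{1}_{\{|\phi|\le k\}}(x,y)$ for each $k \in \N$. Clearly $\phi_k \in L^\infty(\R^{2n})$ with $\|\phi_k\|_\infty \le k$, and since $G$ is increasing on $\R_+$ we have $G(|\phi_k|)\le G(|\phi|)$, so $\phi_k \in L^G(\R^{2n},\mu_N)$ as well. Thus each $\phi_k$ lies in $L^G(\R^{2n},\mu_N)\cap L^\infty(\R^{2n})$.

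Next, I would check modular convergence $\Phi_{G,\mu_N}(\phi-\phi_k)\to 0$, where $\Phi_{G,\mu_N}(\psi):=\int_{\R^{2n}} G(|\psi|)\,d\mu_N$. Note that $\phi - \phi_k = \phi \cdot \mathbf{1}_{\{|\phi|>k\}}$, so
\[
G(|\phi(x,y)-\phi_k(x,y)|) = G(|\phi(x,y)|)\,\mathbf{1}_{\{|\phi|>k\}}(x,y).
\]
Since $\phi$ is finite $\mu_N$-a.e., the integrand tends to $0$ pointwise $\mu_N$-a.e.\ as $k\to\infty$, and it is dominated by $G(|\phi|)\in L^1(\mu_N)$. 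Lebesgue's dominated convergence theorem then yields $\int_{\R^{2n}} G(|\phi-\phi_k|)\,d\mu_N \to 0$.

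Finally, I would upgrade modular convergence to Luxemburg-norm convergence. Fix $\lambda>0$; since $G$ satisfies \eqref{D2}, iterating the $\Delta_2$-inequality gives a constant $C_\lambda$ with $G(t/\lambda)\le C_\lambda\, G(t)$ for all $t\ge 0$ when $\lambda\le 1$ (and trivially $G(t/\lambda)\le G(t)$ when $\lambda\ge 1$). Hence for each fixed $\lambda>0$,
\[
\int_{\R^{2n}} G\!\left(\frac{|\phi-\phi_k|}{\lambda}\right) d\mu_N \le C_\lambda \int_{\R^{2n}} G(|\phi-\phi_k|)\,d\mu_N \xrightarrow[k\to\infty]{} 0,
\]
and in particular the left-hand side is eventually $\le 1$, which by the definition of the Luxemburg norm gives $\|\phi-\phi_k\|_{L^G(\R^{2n},\mu_N)} \le \lambda$ for $k$ large. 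Since $\lambda>0$ was arbitrary, $\phi_k \to \phi$ in $L^G(\R^{2n},\mu_N)$, proving density.

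The only mildly delicate point is the passage from modular to norm convergence, which is exactly where the $\Delta_2$ condition (guaranteed by \eqref{cond} via Remark~\ref{reflexivo}) is used; everything else is routine truncation plus dominated convergence, and does not require anything special about the measure $\mu_N$ beyond it being $\sigma$-finite.
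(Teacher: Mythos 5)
Your proof is correct and takes essentially the same route as the paper: truncation plus dominated convergence applied to the modular (the paper clips $\phi$ at the levels $\pm k$ rather than setting it to zero on $\{|\phi|>k\}$, which changes nothing of substance, and your variant even gives the cleaner dominating function $G(|\phi|)$ instead of $G(2|\phi|)$). Your explicit upgrade from modular convergence to Luxemburg-norm convergence via the $\Delta_2$ condition \eqref{D2} is a detail the paper leaves implicit, so it is a harmless addition rather than a different approach.
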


\begin{proof}
Given $k>0$ and $\phi\in L^G(\R^{2n},\mu_N)$, we define
$$
\phi_k(x,y) := \begin{cases}
k & \text{if } \phi(x,y)\ge k\\
\phi(x,y) & \text{if } -k<\phi(x,y)<k\\
-k & \text{if } \phi(x,y)\le -k.
\end{cases}
$$
Observe that $|\phi_k|\le \min\{|\phi|, k\}$, hence, $\phi_k\in L^G(\R^{2n},\mu_N)\cap L^\infty(\R^{2n})$. Also, $\phi_k\to\phi$ a.e. as $k\to\infty$. Finally, observe that
$$
G(|\phi-\phi_k|)\le G(2|\phi|)\in L^1(\R^{2n}, \mu_N).
$$
So, we can apply the Lebesgue Dominated Convergence Theorem to conclude that
$$
\int_{\R^{2n}} G(|\phi-\phi_k|)\, d\mu_N\to 0\quad \text{as } k\to\infty,
$$
and the proof is completed.
\end{proof}

\begin{lema}\label{cont.densa}
$C_c(\R^{2n}_*)$ is dense in $L^G(\R^{2n},\mu_N)$. 
\end{lema}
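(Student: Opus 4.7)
The plan is to combine a double truncation (to escape both the diagonal and infinity) with the classical density of $C_c$ in $L^G$ of a finite Radon measure. By the preceding lemma it is enough to approximate an arbitrary $\phi \in L^G(\R^{2n},\mu_N)\cap L^\infty(\R^{2n})$ by elements of $C_c(\R^{2n}_*)$.

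First, I would exhaust $\R^{2n}_*$ by the compact sets
$$
K_m := \{(x,y) \in \R^{2n} : |x| \le m,\ |y|\le m,\ |x-y| \ge 1/m\}, \qquad m\in\N,
$$
and set $\phi_m := \phi\,\chi_{K_m}$. Then
$$
G(|\phi-\phi_m|) = G(|\phi|)\,\chi_{\R^{2n}_*\setminus K_m}
$$
is dominated by the integrable function $G(|\phi|)$ and tends to zero $\mu_N$-a.e., so dominated convergence yields $\int_{\R^{2n}} G(|\phi-\phi_m|)\, d\mu_N \to 0$. Since $G$ satisfies \eqref{D2}, modular convergence implies convergence in the Luxemburg norm, hence $\phi_m \to \phi$ in $L^G(\mu_N)$.

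Next, each $K_m$ is compact in $\R^{2n}_*$, and since $N$ is nondecreasing and strictly positive on $(0,\infty)$ by \eqref{P1}, one has $N(|x-y|) \ge N(1/m) > 0$ on $K_m$, so $\mu_N$ is finite on any relatively compact open neighborhood $U_m \subset \R^{2n}_*$ of $K_m$. Because $\phi_m$ is bounded and supported in $K_m$, Lusin's theorem on the finite Radon measure space $(U_m, \mu_N|_{U_m})$ yields a sequence $\psi_{m,k}\in C_c(U_m)\subset C_c(\R^{2n}_*)$ with $\|\psi_{m,k}\|_\infty \le \|\phi\|_\infty$ and $\psi_{m,k}\to \phi_m$ $\mu_N$-a.e. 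Bounded convergence applied to $G(|\psi_{m,k}-\phi_m|)\le G(2\|\phi\|_\infty)\,\chi_{U_m}$ on the finite-measure set $U_m$, together with \eqref{D2}, gives $\psi_{m,k}\to\phi_m$ in $L^G(\mu_N)$. A standard diagonal extraction then produces a single sequence in $C_c(\R^{2n}_*)$ converging to $\phi$ in $L^G(\mu_N)$.

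The main technical point is controlling the singular factor $1/N(|x-y|)$: it is precisely the uniform separation $|x-y|\ge 1/m$ imposed in the definition of $K_m$ that makes $\mu_N$ finite on a neighborhood of $K_m$, despite its possible infinitude on any neighborhood of the diagonal. The $\Delta_2$ property of $G$ is likewise essential, in both truncation steps, to transfer modular convergence to convergence in the Luxemburg norm.
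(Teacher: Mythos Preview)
Your argument is correct and follows essentially the same route as the paper: reduce to bounded $\phi$, truncate to the compact sets $K_m$ (the paper uses the identical sets, indexed by $\delta$), and apply Lusin's theorem for the Radon measure $\mu_N$ on $\R^{2n}_*$. The only difference is cosmetic: the paper carries out a single $\varepsilon$-estimate, bounding $\int_{K_\delta} G(|\phi-\psi|)\,d\mu_N$ directly by $G(2\|\phi\|_\infty)\,\mu_N(\{\phi\neq\psi\})$, instead of passing through a.e.\ convergence and a diagonal extraction.
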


\begin{proof}
Consider $\phi \in L^G(\R^{2n},\mu_N)$. We may assume that $\phi\in L^\infty(\R^{2n})$ and that
$$ 
\int_{\R^{2n}} G(|\phi|) \; d\mu_N \leq 1. 
$$
Given $\varepsilon >0$, we can choose $\delta>0$ such that
$$ 
\int_{\R^{2n}-K_\delta} G(|\phi|) \, d\mu_N < \frac{\varepsilon}{2}
$$ 
where
$$ 
K_\delta = \{ (x,y) \in \R^{2n} : |x-y| \geq \delta,|x| \leq 1/\delta,|y| \leq 1/\delta\}. 
$$
Using then Lusin's theorem (\cite[theorem 2.23 of chapter II]{Rudin}), which we may since $\mu_N$ is a Radon measure in $\R^{2n}_*$, we can construct a function $\psi \in C_c(\R^{2n}_*)$ such that $\mu_{N}(\{ (x,y)\in K_\delta: \phi(x,y) \neq \psi(x,y) \})< \frac{\varepsilon}{2G(2\|\phi\|_\infty)}$ and $\|\psi\|_\infty\le \|\phi\|_\infty$. Then
$$ 
\int_{K_\delta} G(|\phi-\psi|) \, d\mu_N \leq G(2\|\phi\|_\infty) \mu_N(\{ (x,y)\in K_\delta: \phi(x,y) \neq \psi(x,y) \}) \leq \frac{\varepsilon}{2}. 
$$  
Hence
$$  \int_{\R^{2n}_*} G(|\phi-\psi|) \, d\mu_N < \varepsilon $$
which implies the desired result.
\end{proof}

With this preliminaries we can now conclude the following result:
\begin{prop}
Let $\phi\in L^G(\R^{2n}, d\mu_N)$ and $\{\phi_\ve\}_{\ve>0}$ defined in \eqref{regularizada.phi}. Then
$$
\|\phi-\phi_\ve\|_{L^G(\R^{2n}, d\mu_N)}\to 0 \quad \text{as } \ve\to 0.
$$
\end{prop}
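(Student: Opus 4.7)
The plan is to establish this convergence via a standard three-$\varepsilon$ argument, pivoting on the density result of Lemma \ref{cont.densa} together with the contraction property already recorded in Lemma \ref{lema.reg}.

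The first step is to upgrade Lemma \ref{lema.reg} from a modular inequality to a Luxemburg norm inequality. Since the mollification operator $\phi \mapsto \phi_\varepsilon$ commutes with scaling ($(\phi/\lambda)_\varepsilon = \phi_\varepsilon/\lambda$), Lemma \ref{lema.reg} gives $\Phi_G((\phi/\lambda)_\varepsilon) \leq \Phi_G(\phi/\lambda)$, which by the definition of the Luxemburg norm implies $\|\phi_\varepsilon\|_{L^G(\mu_N)} \leq \|\phi\|_{L^G(\mu_N)}$. Applied to $\phi - \psi$, this yields $\|\phi_\varepsilon - \psi_\varepsilon\|_{L^G(\mu_N)} \leq \|\phi - \psi\|_{L^G(\mu_N)}$ for any $\psi$.

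Now, given $\eta > 0$, I would use Lemma \ref{cont.densa} to choose $\psi \in C_c(\R^{2n}_*)$ with $\|\phi - \psi\|_{L^G(\mu_N)} < \eta/3$. The triangle inequality gives
\[
\|\phi - \phi_\varepsilon\|_{L^G(\mu_N)} \leq \|\phi - \psi\|_{L^G(\mu_N)} + \|\psi - \psi_\varepsilon\|_{L^G(\mu_N)} + \|\psi_\varepsilon - \phi_\varepsilon\|_{L^G(\mu_N)},
\]
and the first and third terms are each bounded by $\eta/3$ by the contraction property. It remains to show that $\|\psi - \psi_\varepsilon\|_{L^G(\mu_N)} \to 0$ for $\psi \in C_c(\R^{2n}_*)$.

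For the remaining step, I would exploit the fact that $\supp(\psi)$ is a compact subset of $\R^{2n}_*$, so it is contained in a slab $K = \{(x,y) : |x-y| \geq \delta,\ |x|,|y| \leq R\}$ for some $\delta, R > 0$. For $\varepsilon < \delta/2$, the inclusion $\supp(\psi_\varepsilon) \subset K + \{(z,z) : |z| \leq \varepsilon\}$ together with the identity $(x-z) - (y-z) = x - y$ shows that both $\psi$ and $\psi_\varepsilon$ are supported in the compact set $K' = \{|x-y| \geq \delta,\ |x|,|y|\leq R+1\} \subset \R^{2n}_*$, which has finite $\mu_N$-measure since $\mu_N$ is Radon on $\R^{2n}_*$. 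Uniform continuity of $\psi$ on $\R^{2n}$ gives $\psi_\varepsilon \to \psi$ uniformly, so
\[
\Phi_G(\psi - \psi_\varepsilon) = \int_{K'} G(|\psi - \psi_\varepsilon|)\, d\mu_N \leq G(\|\psi - \psi_\varepsilon\|_\infty)\, \mu_N(K') \longrightarrow 0.
\]
Since $G$ satisfies the $\Delta_2$ condition, modular convergence to zero implies Luxemburg norm convergence to zero, completing the proof.

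The only subtle point — and the reason the proof is not entirely routine — is the control of $\supp(\psi_\varepsilon)$: one must verify that the diagonal mollification $(x,y) \mapsto \int \psi(x-z,y-z)\rho_\varepsilon(z)\,dz$ preserves the condition $|x-y|\geq\delta$, which is exactly why the lower bound on $|x-y|$ is invariant under $\tau_z$ (the same invariance exploited in Lemma \ref{lema.reg}). Without this observation one could not bound the $\mu_N$-measure of the support, since $\mu_N$ is generally singular near the diagonal.
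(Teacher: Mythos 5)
Your proof is correct and follows essentially the same route as the paper: approximate $\phi$ by $\psi\in C_c(\R^{2n}_*)$ via Lemma \ref{cont.densa}, control $\|\psi_\ve-\phi_\ve\|_{L^G(\R^{2n},d\mu_N)}$ by the contraction of Lemma \ref{lema.reg}, and conclude with the triangle inequality once $\|\psi-\psi_\ve\|_{L^G(\R^{2n},d\mu_N)}\to 0$. You merely spell out details the paper leaves implicit (the scaling step upgrading the modular contraction to a norm contraction, and the uniform-in-$\ve$ support control away from the diagonal plus uniform convergence), which is a faithful, slightly more detailed version of the same argument.
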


\begin{proof}
Let $\phi\in L^G(\R^{2n}, d\mu_N)$ and $\delta>0$. Then, using Lemma \ref{cont.densa}, there exists $\psi\in C_c(\R^{2n}_*)$ such that $\|\phi-\psi\|_{L^G(\R^{2n}, d\mu_N)}<\delta$.

Let $\{\phi_\ve\}_{\ve>0}$ and $\{\psi_\ve\}_{\ve>0}$ be the regularized functions given by \eqref{regularizada.phi}. Observe that since $\psi\in C_c(\R^{2n}_*)$, it follows that the support of $\phi_\ve$ is compact and bounded uniformly in $\ve>0$. Moreover, $\psi_\ve\to\psi$ uniformly. These immediately imply that
$$
\|\psi-\psi_\ve\|_{L^G(\R^{2n}, d\mu_N)}\to 0,
$$
as $\ve\to 0$.

Therefore, using Lemma \ref{lema.reg},
\begin{align*}
\|\phi - \phi_\ve\|&_{L^G(\R^{2n}, d\mu_N)}\\
&\le \|\phi - \psi\|_{L^G(\R^{2n}, d\mu_N)}+ \|\psi - \psi_\ve\|_{L^G(\R^{2n}, d\mu_N)}+\|\psi_\ve - \phi_\ve\|_{L^G(\R^{2n}, d\mu_N)}\\
&\le 2\delta +  \|\psi - \psi_\ve\|_{L^G(\R^{2n}, d\mu_N)}
\end{align*}
and the result follows.
\end{proof}

From this result we get:
\begin{cor}\label{reg.converge}
Let $u\in W^{M,N,G}(\R^n)$ and let $\{u_\ve\}_{\ve>0}$ be the regularized functions defined in \eqref{regularizada}. Then
$$
\|u-u_\ve\|_{M,N,G}\to 0, \quad \text{as }\ve\to 0.
$$
\end{cor}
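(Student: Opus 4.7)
The plan is to split the target quantity into its two natural pieces and treat each separately:
$$
\|u-u_\ve\|_{M,N,G} \;=\; \|u-u_\ve\|_G \;+\; [u-u_\ve]_{M,N,G}.
$$
It suffices to show that each of these terms tends to $0$ as $\ve \to 0$.

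For the $L^G$ piece, I would appeal to classical mollifier theory in Orlicz spaces. Since $G$ satisfies \eqref{cond}, both $G$ and $G^*$ are in $\Delta_2$ (Remark \ref{reflexivo}), and by Proposition \ref{propiedades} the space $C_c^\infty(\R^n)$ is dense in $L^G(\R^n)$. A standard three-$\delta$ argument then reduces convergence $\|u-u_\ve\|_G \to 0$ to uniform convergence of $\psi_\ve$ to $\psi$ for $\psi \in C_c^\infty(\R^n)$, combined with the contractivity $\|v * \rho_\ve\|_G \le \|v\|_G$ (which is just Jensen's inequality, i.e.\ the degenerate $N \equiv 1$ case of Lemma \ref{lema.reg}).

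For the seminorm piece, the essential observation, already recorded after \eqref{regularizada.phi}, is that the convolution $u \mapsto u_\ve$ commutes with the $M$-H\"older quotient:
$$
D_M u_\ve(x,y) \;=\; (D_M u)_\ve(x,y),
$$
because the measure $\mu_N$ and the operator $D_M$ are both invariant under the diagonal translation $\tau_z$. Consequently, by linearity,
$$
D_M(u - u_\ve) \;=\; D_M u - (D_M u)_\ve.
$$
Setting $\phi = D_M u \in L^G(\R^{2n}, d\mu_N)$ (which lies in that space because $u \in W^{M,N,G}(\R^n)$), the preceding Proposition applied to $\phi$ yields
$$
\|D_M u - (D_M u)_\ve\|_{L^G(\R^{2n}, d\mu_N)} \;\longrightarrow\; 0 \quad\text{as } \ve \to 0.
$$
Since by definition $[u - u_\ve]_{M,N,G}$ is exactly the Luxemburg norm of $D_M(u-u_\ve)$ in $L^G(\R^{2n}, d\mu_N)$, this gives $[u-u_\ve]_{M,N,G}\to 0$ and completes the proof.

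There is no real obstacle here: the whole point of the preceding pages (the construction of $\phi_\ve$, Lemma \ref{lema.reg}, the density lemmas for $L^G(\R^{2n}, d\mu_N)$, and the proposition on $\|\phi-\phi_\ve\|$) was to make this last step immediate. The only bit of care needed is bookkeeping the commutativity $D_M u_\ve = (D_M u)_\ve$, which ultimately reduces to the translation-invariance of $\mu_N$ in the diagonal direction.
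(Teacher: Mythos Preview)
Your proposal is correct and matches the paper's approach exactly: the corollary is stated in the paper as an immediate consequence of the preceding proposition (``From this result we get:''), and your write-up simply spells out the intended argument---splitting into the $L^G$ part and the seminorm part, using the commutativity $D_M u_\ve = (D_M u)_\ve$ recorded after \eqref{regularizada.phi}, and applying the proposition to $\phi = D_M u$.
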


We also need estimates on modulars of truncated functions. We use the following notations: Let $\eta\in C_c^\infty(\R^n)$ such that $\eta=1$ in $B_1(0)$, $\supp (\eta)=B_2(0)$, $0\leq \eta\leq 1$ in $\R^n$ and $\|\nabla \eta\|_\infty\le 2$. Given $k\in\N$ we define $\eta_k(x)=\eta(\tfrac{x}{k})$. Observe that  $\{\eta_k\}_{k\in\N} \in C_c^\infty(\R^n)$ and
$$
0\leq \eta_k \leq 1, \quad \eta_k =1 \text{ in } B_k(0), \quad  \supp (\eta_k)=B_{2k} (0),\quad  |\nabla \eta_k|\leq \frac{2}{k}.
$$
Given $u\in L^G(\R^n)$ we define the truncated functions $u_k$, $k\in\N$ as 
\begin{equation} \label{truncada}
u_k=\eta_k u.
\end{equation}

In the next lemma we analyze the behavior of the modular of truncated functions. 

\begin{lema} \label{lema.trunc}
Let $u\in W^{M,N,G}(\R^n)$ and $\{u_k\}_{k\in\N}$ be the functions defined in \eqref{truncada}. Then
$$
\Phi_{M,N,G}(u_k) \leq C(\Phi_{M,N,G}(u) + \Phi_G (u) )
$$
where $C>0$ is independent of $u$.
\end{lema}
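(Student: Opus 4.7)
The plan is to split $u_k(x)-u_k(y)$ using the product rule and then reduce the estimate for $\Phi_{M,N,G}(u_k)$ to a multiple of $\Phi_{M,N,G}(u)$ plus a tail term controlled by $\Phi_G(u)$. Concretely, I would first write
$$
u_k(x)-u_k(y)=\eta_k(x)\bigl(u(x)-u(y)\bigr)+u(y)\bigl(\eta_k(x)-\eta_k(y)\bigr),
$$
and use $0\le \eta_k\le 1$ to bound the first term by $|u(x)-u(y)|$. By convexity of $G$ followed by one application of the $\Delta_2$ condition \eqref{D2}, one then gets pointwise
$$
G\!\left(\frac{|u_k(x)-u_k(y)|}{M(|x-y|)}\right)\le \frac{\mathbf{C}}{2}\,G\!\left(\frac{|u(x)-u(y)|}{M(|x-y|)}\right)+\frac{1}{2}\,G\!\left(\frac{2|u(y)|\,|\eta_k(x)-\eta_k(y)|}{M(|x-y|)}\right).
$$
Integrating against $d\mu_N$, the first piece contributes $\tfrac{\mathbf{C}}{2}\Phi_{M,N,G}(u)$, so everything reduces to controlling the integral $I$ of the second piece by a constant times $\Phi_G(u)$.

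To handle $I$ I would split $\R^{2n}=\{|x-y|\le 1\}\cup\{|x-y|>1\}$ and use the two available bounds for $\eta_k$: the Lipschitz estimate $|\eta_k(x)-\eta_k(y)|\le \tfrac{2}{k}|x-y|\le 2|x-y|$ (valid uniformly in $k\ge 1$), and the crude bound $|\eta_k(x)-\eta_k(y)|\le 2$. On $\{|x-y|\le 1\}$, by \eqref{P2} we have $|x-y|/M(|x-y|)\le 1$, so using the scaling consequence of \eqref{cond} (namely $G(st)\le s^{p^-}G(t)$ for $0\le s\le 1$) together with $\Delta_2$,
$$
G\!\left(\frac{4|u(y)|\,|x-y|}{M(|x-y|)}\right)\le \mathbf{C}\,\frac{|x-y|^{p^-}}{M(|x-y|)^{p^-}}\,G(|u(y)|).
$$
On $\{|x-y|>1\}$, \eqref{P2} gives $M(|x-y|)\ge 1$, so $\frac{4|u(y)|}{M(|x-y|)}\le 4|u(y)|$, and again the scaling bound plus $\Delta_2$ yield
$$
G\!\left(\frac{4|u(y)|}{M(|x-y|)}\right)\le \mathbf{C}^{2}\,\frac{1}{M(|x-y|)^{p^-}}\,G(|u(y)|).
$$

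Passing to polar coordinates in $h=x-y$, the two pieces reduce respectively to
$$
n\omega_n\int_0^1\frac{r^{n-1+p^-}}{N(r)\,M(r)^{p^-}}\,dr\cdot \Phi_G(u)\qquad\text{and}\qquad n\omega_n\int_1^\infty\frac{r^{n-1}}{N(r)\,M(r)^{p^-}}\,dr\cdot \Phi_G(u),
$$
both of which are finite precisely by \eqref{P3}. Combining, $I\le C\,\Phi_G(u)$ with $C$ depending only on $n,G,M,N$ and independent of $k$ and $u$, whence $\Phi_{M,N,G}(u_k)\le C(\Phi_{M,N,G}(u)+\Phi_G(u))$.

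The only delicate point is the tail integral on $\{|x-y|\le 1\}$, where a naive estimate $|\eta_k(x)-\eta_k(y)|\le 2$ would not be integrable against $d\mu_N/M^{p^-}$ near the diagonal; the proof must use the Lipschitz bound to produce the factor $|x-y|^{p^-}$ in the numerator, so as to match exactly the integrand appearing in the first integral of \eqref{P3}. Once this matching is done the rest is routine.
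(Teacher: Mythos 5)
Your argument is correct and follows essentially the same route as the paper's proof: the product-rule splitting of $u_k(x)-u_k(y)$, convexity plus $\Delta_2$, the near/far decomposition at $|x-y|=1$, the Lipschitz bound on $\eta_k$ near the diagonal to produce the factor $|x-y|^{p^-}$, and \eqref{P2}--\eqref{P3} to close both integrals, all uniformly in $k$. If anything, your far-field estimate is slightly more careful than the paper's displayed bound, since you extract the exponent $M(|x-y|)^{p^-}$ that is actually needed to invoke the second integral in \eqref{P3}.
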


\begin{proof}
From \eqref{cond} and since $\eta_k\leq 1$ we have
\begin{align*}
G\left(\frac{|u_k(x)-u_k(y)|}{ M(|x-y|)}\right) &\leq  C\left(G\left(\frac{|u(x)-u(y)|}{ M(|x-y|)}\right) + G\left(\frac{|u(x)||\eta_k(x)-\eta_k(y)|}{ M(|x-y|)}\right)\right).
\end{align*}
Then we get
\begin{align*}
\int_{\R^{2n}} G &\left(  \frac{|u_k(x)-u_k(y)|}{M(|x-y|)} \right) d\mu_N(x,y)  \leq \\
&C \Phi_{M,N,G}(u) + C \int_{\R^{2n}}  G\left(\frac{|u(x)||\eta_k(x)-\eta_k(y)|}{ M(|x-y|)}\right) d\mu_N(x,y).
\end{align*}

The integral above can be splited as follows.
\begin{align*}
\left(\intr \int_{|x-y|\geq 1} + \intr \int_{|x-y|< 1}\right)  G\left(\frac{|u(x)||\eta_k(x)-\eta_k(y)|}{ M(|x-y|)}\right) d\mu_N(x,y) := I_1 + I_2.
\end{align*}

The monotonicity of $G$ and \eqref{cotaG2} allow us to bound $I_1$ as follows
\begin{align*}
I_1 &\leq
\intr\int_{|x-y|\geq 1}  \frac{G(2|u(x)|)}{M(|x-y|)N(|x-y|)}\,dx\,dy\\
&= \int_{B_1^c} \frac{dh}{M(|h|) N(|h|)} \intr G(2|u(x)|) \,dx\\
&\leq C \Phi_G(u) .
\end{align*}

We deal now with $I_2$. Using that $|\nabla \eta_k|\leq \tfrac{2}{k}$,   \eqref{P2} and \eqref{P3}, we get
\begin{align*}
\intr \int_{|x-y|<1} G&\left(\frac{|u(x)||\eta_k(x)-\eta_k(y)|}{ M(|x-y|)}\right)d\mu_N(x,y) \leq \\
&\intr \int_{|x-y|\leq 1} G\left(\frac{2}{k} \frac{|u(x)| |x-y|}{  M(|x-y|)}\right) d\mu_N(x,y)\\
&\leq C \int_{B_1} \frac{|h|^{p^-}}{M(h)^{p^-}N(h)} \,dh \Phi_G(u)\\
&=C \Phi_G(u).
\end{align*}
From these estimates the conclusion of the lemma follows.
\end{proof}

\begin{lema}\label{lema.trunc}
Let $u\in W^{M,N,G}(\R^n)$ and $\{u_k\}_{k\in\N}$ be the functions defined in \eqref{truncada}. Then
$$
\| u_k - u\|_{M,N,G} \to 0 \quad \text{ as } k\to\infty.
$$
\end{lema}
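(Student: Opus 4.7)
Since $G$ satisfies \eqref{cond}, hence \eqref{D2}, on both $L^G(\R^n)$ and $L^G(\R^{2n},\mu_N)$ Luxemburg-norm convergence is equivalent to modular convergence. It therefore suffices to prove $\Phi_G(u_k-u)\to 0$ and $\Phi_{M,N,G}(u_k-u)\to 0$. Setting $v_k:=u_k-u=(\eta_k-1)u$, we have $|v_k|\le|u|\in L^G(\R^n)$ and $v_k(x)\to 0$ for every $x$, since $\eta_k(x)=1$ as soon as $k\ge|x|$. Applying Lebesgue dominated convergence to $G(|v_k|)\le G(|u|)\in L^1(\R^n)$ yields $\Phi_G(v_k)\to 0$.

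The bulk of the work is the double integral. Decompose the $M$-H\"older quotient as
\[
D_M v_k(x,y)=(\eta_k(x)-1)\,D_M u(x,y)+\frac{u(y)\bigl(\eta_k(x)-\eta_k(y)\bigr)}{M(|x-y|)}=:A_k(x,y)+B_k(x,y).
\]
By \eqref{D2}, $G(|D_M v_k|)\le \mathbf{C}\bigl(G(|A_k|)+G(|B_k|)\bigr)$, so it is enough to show separately that $\int G(|A_k|)\,d\mu_N\to 0$ and $\int G(|B_k|)\,d\mu_N\to 0$. For the term $A_k$, convexity of $G$ together with $G(0)=0$ and $0\le 1-\eta_k(x)\le 1$ gives $G(|A_k|)\le G(|D_M u|)\in L^1(\mu_N)$ (because $u\in W^{M,N,G}(\R^n)$), while $A_k\to 0$ pointwise, and DCT applies.

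The remaining term $B_k$ is handled by splitting into $\{|x-y|\ge 1\}$ and $\{|x-y|<1\}$, using throughout the sharper convexity inequality
\[
G(\lambda t)\le\lambda^{p^-}G(t)\qquad\text{for all }\lambda\in[0,1],\ t\ge 0,
\]
which is a standard consequence of \eqref{cond} (the function $t\mapsto G(t)/t^{p^-}$ is nondecreasing). On $\{|x-y|\ge 1\}$ bound $|\eta_k(x)-\eta_k(y)|\le 2$ and use $M(|x-y|)\ge 1$ (from \eqref{P2}) together with the above inequality and \eqref{D2} to obtain $G(|B_k|)\le C\,G(|u(y)|)/M(|x-y|)^{p^-}$; this majorant is $\mu_N$-integrable over the region thanks to the second part of \eqref{P3}, and since $B_k\to 0$ pointwise for $x\neq y$, DCT closes this piece. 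On $\{|x-y|<1\}$ use instead the Lipschitz bound $|\eta_k(x)-\eta_k(y)|\le(2/k)|x-y|$ and $|x-y|/M(|x-y|)\le 1$ (from $M(r)\ge r$ for $r\in(0,1)$, guaranteed by \eqref{P2}); applying the convexity inequality twice yields
\[
G(|B_k(x,y)|)\le\Bigl(\tfrac{2}{k}\Bigr)^{p^-}\,\frac{|x-y|^{p^-}}{M(|x-y|)^{p^-}}\,G(|u(y)|),
\]
and the $\mu_N$-integral is bounded by $Ck^{-p^-}\Phi_G(u)$, which tends to $0$ by the first part of \eqref{P3}.

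The only real technical hurdle is matching the right pointwise bound on $|\eta_k(x)-\eta_k(y)|$ in each region (the trivial bound $2$ for $|x-y|\ge 1$ versus the Lipschitz bound $(2/k)|x-y|$ for $|x-y|<1$) with the sharper convexity estimate $G(\lambda t)\le\lambda^{p^-}G(t)$, so that the integrable majorants reduce precisely to the integrals appearing in \eqref{P3}. Once this bookkeeping is in place the conclusion follows from two applications of dominated convergence together with the explicit $k^{-p^-}$ decay.
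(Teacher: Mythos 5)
Your proof is correct and follows essentially the same route as the paper: the same product-rule decomposition of the $M$-H\"older quotient of the truncation, the same near/far split in $|x-y|$ controlled by \eqref{P2}--\eqref{P3}, and dominated convergence. The only cosmetic difference is that the paper packages everything into a single $k$-independent majorant in $L^G(\R^{2n},d\mu_N)$ and invokes the Orlicz dominated convergence theorem once, whereas you split via \eqref{D2} and get an explicit $k^{-p^-}$ rate on the Lipschitz piece; both are fine.
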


\begin{proof}
Observe that $u_k\to u$ a.e. and $|u_k-u|\leq 2|u|\in L^G(\R^n)$, then by the dominated convergence theorem we have that $\Phi_G(u-u_k)\to 0$. For the second part, since
$$
D_M u_k(x,y)= u(x) D_M \eta_k (x,y) + \eta_k(y) D_M u(x,y)\quad a.e.
$$
we have that
$$
|D_M u_k(x,y)| \leq  2\frac{|u(x)||x-y|}{M(|x-y|)}\chi_{B(x,1)}(y) +   2\frac{|u(x)|}{M(|x-y|)}\chi_{B(x,1)^c}(y)  + |D_M u(x,y)|
$$
and from condition \eqref{P3}, it follows easily that the right hand side of the inequality above belongs to $L^G(\R^{2n}, d\mu_N)$. Therefore, using again the dominated convergence theorem for Orlicz spaces we get that $\Phi_{M,N,G}(u-u_k) \to 0$.
\end{proof}

Finally, after all these preparatives, the proof of Proposition \ref{prop.test.densas} follows immediately.
\begin{proof}[Proof of Proposition \ref{prop.test.densas}]
At this point, the proof is a simple combination of Corollary \ref{reg.converge} and Lemma \ref{lema.trunc}.
\end{proof}

\section{The P\'{o}lya-Szeg\"{o} principle} \label{principio}
In this section we prove our main result. Throughout this section $H\subset \R^n$ will denote a closed half-space and $\tilde x$ the reflexion respect to $\partial H$. 

Let us begin with the definition of {\em polarization}. Polarization is a useful tool for proving and studying rearrangements.

\begin{defn}
If $H\subset \R^n$ is a closed half-space, $\sigma_H\colon \R^n\to \R^n$ is the reflexion with respect to $\partial H$, and $u\colon \R^n\to \R$ is a measurable function, the \emph{polarization} of $u$ with respect to $H$ is the function $u_H\colon \R^n\to \R$ defined by
\begin{align}  \label{polarizada}
u_H(x):=
\begin{cases}
\max\{u(x), u(\tilde x)\} &\quad \text{ if } x \in H\\
\min\{u(x), u(\tilde x)\} &\quad \text{ if } x \in \R^n\setminus H.
\end{cases}
\end{align}
\end{defn}

\begin{rem} The following trivial inequalities will be most useful in the sequel. Namely, for any  $x,y\in H$ it holds that
\begin{equation} \label{rel.tilde}
|x-y|=|\tilde x-\tilde y|, \qquad |\tilde x-y|= |x-\tilde y|,
\end{equation}
\begin{equation} \label{rel.triang}
|x-y|\leq |x-\tilde y|.
\end{equation}
\end{rem}

The following inequality concerning two points rearrangements is useful for our computations.

\begin{lema}\cite[Lemma 1]{ChPu} \label{lema.elemental}
Let $G\colon \R^+\to \R^+$ be a strictly increasing convex function. Then, for all real numbers $\alpha,\beta,\gamma,\delta\in\R$ such that $\gamma<\alpha$ and $\delta<\beta$, it holds that
$$
G(\lam|\alpha-\beta|)+G(\lam|\gamma-\delta|)\leq G(\lam|\alpha-\delta|)+G(\lam|\gamma-\beta|)
$$
where $\lam>0$ is a fixed number.
\end{lema}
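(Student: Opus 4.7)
The plan is to reduce the claim to a one-variable statement about convex functions on $\R$ via an even extension. First I would define $\tilde G\colon \R\to \R^+$ by $\tilde G(t):=G(\lam|t|)$. Because $G$ is convex and increasing on $\R^+$ and $\lam>0$, the function $\tilde G$ is even and convex on the whole real line: for $s,t\in\R$ and $\theta\in[0,1]$, the triangle inequality $|\theta s+(1-\theta)t|\leq \theta|s|+(1-\theta)|t|$ combined with the monotonicity and convexity of $G$ yields
$$
\tilde G(\theta s+(1-\theta)t)\leq G\bigl(\lam\theta|s|+\lam(1-\theta)|t|\bigr)\leq \theta\tilde G(s)+(1-\theta)\tilde G(t).
$$

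Rewriting the claim in terms of $\tilde G$, it becomes
$$
\tilde G(\alpha-\beta)+\tilde G(\gamma-\delta)\leq \tilde G(\alpha-\delta)+\tilde G(\gamma-\beta),
$$
which I would rearrange as
$$
\tilde G(\alpha-\delta)-\tilde G(\alpha-\beta)\geq \tilde G(\gamma-\delta)-\tilde G(\gamma-\beta).
$$
Setting $h:=\beta-\delta>0$, both sides acquire the form $\tilde G(x+h)-\tilde G(x)$: the left-hand side with $x=\alpha-\beta$, the right-hand side with $x=\gamma-\beta$; and since $\gamma<\alpha$ we have $\gamma-\beta<\alpha-\beta$.

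The inequality therefore reduces to the classical fact that, for a convex function $F\colon\R\to\R$ and fixed $h>0$, the increment $x\mapsto F(x+h)-F(x)$ is non-decreasing in $x$. This is an immediate consequence of the monotonicity of difference quotients of a convex function: for $x_1<x_2$, the three-slope inequality applied to the points $x_1<x_2<x_1+h<x_2+h$ (or to $x_1<x_1+h\leq x_2<x_2+h$, depending on the relative size of $h$ and $x_2-x_1$) yields $F(x_2)-F(x_1)\leq F(x_2+h)-F(x_1+h)$. Applied to $F=\tilde G$ at the two values of $x$ above, this is exactly the required inequality.

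The only subtle point is the convexity of the even extension $\tilde G$ at the origin, where $|\,\cdot\,|$ fails to be differentiable; but this is handled transparently by the triangle-inequality step above, which uses both the monotonicity and the convexity of $G$ on $\R^+$. Apart from that, the argument is pure algebraic manipulation plus the standard slope-monotonicity for convex functions, so I do not expect any real obstacle.
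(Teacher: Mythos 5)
Your argument is correct and complete. Note that the paper itself does not prove this lemma at all: it is quoted verbatim from the reference \cite{ChPu}, so there is no in-paper proof to compare with. What you supply is a self-contained elementary derivation: you pass to the even convex extension $\tilde G(t)=G(\lam|t|)$ (your triangle-inequality argument for its convexity, using the monotonicity of $G$ on $\R^+$, is the right way to handle the kink at the origin), and then observe that with $h=\beta-\delta>0$ the claimed inequality is exactly the statement that $x\mapsto \tilde G(x+h)-\tilde G(x)$ is nondecreasing, applied at $\gamma-\beta<\alpha-\beta$; this monotone-increment (supermodularity) property is the standard consequence of convexity via the three-slope inequality, as you indicate. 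Two cosmetic remarks: strict monotonicity of $G$ is never used (nondecreasing suffices), and one should read $\R^+$ as including $0$ (or extend $G$ by continuity there) since the differences may vanish --- harmless in the paper's setting, where $G$ is a Young function with $G(0)=0$. So your proof is a legitimate replacement for the external citation; it buys self-containedness at essentially no cost, which is arguably preferable to the paper's reliance on \cite{ChPu}.
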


With the help of the following two lemmas we prove that polarized functions have always modulars less than the original ones.

\begin{lema} \label{lema1}
Let $G$ be a increasing convex function and $M$ be a positive function. Assume that $x,y\in H$. Then it holds that
\begin{align*} 
G\left( \frac{u_H(x) - u_H(\tilde y)}{M(|x-\tilde{y}|)} \right) + G\left( \frac{u_H(\tilde x)  - u_H(y)}{M(|x-\tilde y|)} \right)  
\leq G\left( \frac{u(x) - u(\tilde y)}{M(|x-\tilde{y}|)} \right) + G\left( \frac{u(\tilde x)  - u(y)}{M(|x-\tilde y|)} \right) 
\end{align*}
\end{lema}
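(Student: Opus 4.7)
My plan is to prove the inequality by a direct four-case analysis on whether polarization swaps values at $\{x, \tilde x\}$ and at $\{y, \tilde y\}$. Setting $a := u(x)$, $b := u(\tilde x)$, $c := u(y)$, $d := u(\tilde y)$, the definition of polarization gives
\[
u_H(x) = \max(a,b), \quad u_H(\tilde x) = \min(a,b), \quad u_H(y) = \max(c,d), \quad u_H(\tilde y) = \min(c,d).
\]
Since $G$ is even (as a Young function), the four arguments of $G$ can be replaced by their absolute values throughout.

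In the two regimes where $a - b$ and $c - d$ have the same sign, either no swap occurs and $u_H$ agrees with $u$ at each of the four points, or swaps occur simultaneously at both pairs; in either situation the unordered multiset of absolute differences $\{|u_H(x) - u_H(\tilde y)|, |u_H(\tilde x) - u_H(y)|\}$ coincides with $\{|u(x) - u(\tilde y)|, |u(\tilde x) - u(y)|\}$, and the two sides of the claimed inequality agree term by term.

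In each of the two \emph{mismatched} regimes---say, $a \geq b$ but $c < d$, the other being symmetric---the polarization alters the pairing of the four values. Extracting the common factor $\lambda := 1/M(|x - \tilde y|)$ and passing to absolute values, the inequality reduces to a four-term comparison of the shape
\[
G(\lambda|a-c|) + G(\lambda|b-d|) \leq G(\lambda|a-d|) + G(\lambda|b-c|),
\]
which is precisely the form supplied by Lemma \ref{lema.elemental} with an appropriate choice of the quadruple $(\alpha, \beta, \gamma, \delta)$ as a permutation of $(a, b, c, d)$; the strict inequalities $\gamma < \alpha$ and $\delta < \beta$ demanded there are encoded by the case hypothesis.

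The only real subtlety I anticipate is the bookkeeping: picking the correct labeling of $(\alpha, \beta, \gamma, \delta)$ in the two mismatched cases so that the monotone-pairing inequality of Lemma \ref{lema.elemental} points in the direction required by the claim. Once the labels are fixed, the desired estimate follows immediately and the argument reduces to this single invocation.
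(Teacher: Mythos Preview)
Your case analysis is set up exactly as in the paper, and both arguments break down at the same point: the mismatched regime. Take the case $a\ge b$ and $c<d$. You correctly reduce the claim to
\[
G(\lambda|a-c|)+G(\lambda|b-d|)\ \le\ G(\lambda|a-d|)+G(\lambda|b-c|),
\]
but no permutation of $(\alpha,\beta,\gamma,\delta)$ makes Lemma~\ref{lema.elemental} yield this direction. The hypotheses $\alpha>\gamma$ and $\beta>\delta$, combined with $a\ge b$ and $d>c$, force (up to the obvious symmetry) $\alpha=a,\ \gamma=b,\ \beta=d,\ \delta=c$, and then Lemma~\ref{lema.elemental} reads
\[
G(\lambda|a-d|)+G(\lambda|b-c|)\ \le\ G(\lambda|a-c|)+G(\lambda|b-d|),
\]
which is the \emph{reverse} of what you need. (The paper's choice $\alpha=u(x),\ \beta=u(y),\ \gamma=u(\tilde x),\ \delta=u(\tilde y)$ simply violates the hypothesis $\delta<\beta$, since the case assumption is $u(\tilde y)\ge u(y)$.)

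In fact the pointwise inequality stated in Lemma~\ref{lema1} is false. With $u(x)=u(\tilde y)=1$, $u(\tilde x)=u(y)=0$, and $G$ an even Young function with $G(0)=0$, the left-hand side equals $2G(1/M(|x-\tilde y|))>0$ while the right-hand side equals $0$. The reason is structural: the pairing $(u_H(x),u_H(\tilde y))$, $(u_H(\tilde x),u_H(y))$ is always the ``max--min, min--max'' (anti-sorted) pairing, which by Lemma~\ref{lema.elemental} \emph{maximizes} the sum, so it cannot lie below an arbitrary original pairing. Lemma~\ref{lema2} is the one that is genuinely true, since there the $u_H$-side is the sorted pairing. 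The conclusion of Proposition~\ref{one-reflection} is of course correct, but its proof cannot proceed by bounding the two pairs of terms separately via Lemmas~\ref{lema1} and~\ref{lema2}; one must treat all four terms together and use, in addition to Lemma~\ref{lema.elemental}, the monotonicity of $M$ and $N$ along with $|x-y|\le|x-\tilde y|$.
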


\begin{proof}
Consider $x,y\in H$. When $u(x)\geq u(\tilde x)$ and $u(y)\geq u(\tilde y)$ using \eqref{polarizada} we get
$$
u_H(x)-u_H(\tilde y)= u(x)-u(\tilde y), \qquad u_H(\tilde x)-u_H(y)= u(\tilde x)-u(y)
$$
and the result follows. When $u(x)\geq u(\tilde x)$ and $u(y) \leq u(\tilde y)$  we have that
$$
G\left( \frac{u_H(x) - u_H(\tilde y)}{M(|x-\tilde{y}|)} \right) + G\left( \frac{u_H(\tilde x)  - u_H(y)}{M(|x-\tilde y|)} \right)  = 
G\left( \frac{u(x)-u(y)}{M(|x-\tilde{y}|)} \right) + G\left( \frac{u(\tilde x)  - u(\tilde y)}{M(|x-\tilde y|)} \right)  
$$
then applying Lemma \ref{lema.elemental} with $\lam=M(|x-\tilde y|)$, $\alpha=u(x)$, $\beta=u(y)$, $\gamma=u(\tilde x)$, $\delta=u(\tilde y)$ we have that
$$
G\left( \frac{u(x) - u(y)}{M(|x-\tilde{y}|)} \right) +  G\left( \frac{u(\tilde x) - u(\tilde y)}{M(|x-\tilde{y}|)} \right) \leq 
 G\left( \frac{u(x) - u(\tilde y)}{M(|x-\tilde{y}|)} \right) + G\left( \frac{u(\tilde x)  - u(y)}{M(|x-\tilde y|)} \right) 
$$
and the lemma follows.

The remaining cases follow analogously.
\end{proof}

\begin{lema} \label{lema2}
Let $G$ be a increasing convex function and $M$ be a positive function. Assume that $x,y\in H$. Then it holds that
\begin{multline*} 
G\left( \frac{u_H(\tilde x) - u_H(\tilde y)}{M(| x- y|)} \right)   +G\left( \frac{u_H(x) - u_H(y)}{M(|x-y|)} \right)    \leq  
G\left( \frac{u(x) - u(y)}{M(|x-y|)} \right) + G\left( \frac{u(\tilde x) - u(\tilde y)}{M(|x-y|)} \right).
\end{multline*}

\end{lema}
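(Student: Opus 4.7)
The plan is to mimic the proof of Lemma \ref{lema1}, performing the same case analysis on the relative order of $u(x)$ vs.\ $u(\tilde x)$ and $u(y)$ vs.\ $u(\tilde y)$, but now with the ``aligned'' distance $M(|x-y|)$ (which equals $M(|\tilde x-\tilde y|)$ by \eqref{rel.tilde}) instead of the ``crossed'' distance $M(|x-\tilde y|)$. For $x,y\in H$ the values of $u_H$ at the four points $x,\tilde x,y,\tilde y$ are obtained by taking a maximum at the points in $H$ and a minimum at their reflections.

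First I would dispose of the two trivial cases: when $u(x)\ge u(\tilde x)$ and $u(y)\ge u(\tilde y)$, the polarization leaves all four values fixed, so the inequality is an equality; the same happens when both inequalities are reversed (the values are merely swapped with their reflections, and the two sides of the claimed inequality coincide once the pairs $(u_H(x),u_H(y))$ and $(u_H(\tilde x),u_H(\tilde y))$ are expressed in terms of $u$).

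The content of the lemma lies in the mixed case, say $u(x)\ge u(\tilde x)$ and $u(y)\le u(\tilde y)$ (the remaining mixed case is symmetric). Here $u_H(x)=u(x),\ u_H(\tilde x)=u(\tilde x),\ u_H(y)=u(\tilde y),\ u_H(\tilde y)=u(y)$, so the left-hand side becomes
\[
G\!\left(\frac{u(x)-u(\tilde y)}{M(|x-y|)}\right)+G\!\left(\frac{u(\tilde x)-u(y)}{M(|x-y|)}\right),
\]
while the right-hand side is unchanged. I would then invoke Lemma \ref{lema.elemental} with $\lambda=1/M(|x-y|)$ and the choice $\alpha=u(x)$, $\beta=u(\tilde y)$, $\gamma=u(\tilde x)$, $\delta=u(y)$; the hypotheses $\gamma<\alpha$ and $\delta<\beta$ (with ``$<$'' replaced by ``$\le$'', which is harmless for a continuous convex $G$) are exactly the case assumptions, and the conclusion is precisely the desired inequality.

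No genuine obstacle is expected: the only thing to double-check is the bookkeeping of which value of $u_H$ equals which value of $u$ in each of the four cases, and that the four terms produced by Lemma \ref{lema.elemental} line up with the four terms appearing in the statement of Lemma \ref{lema2}. The remaining mixed case $u(x)\le u(\tilde x)$, $u(y)\ge u(\tilde y)$ is handled identically after exchanging the roles of $x$ and $\tilde x$ (or, equivalently, of $y$ and $\tilde y$) in the application of Lemma \ref{lema.elemental}.
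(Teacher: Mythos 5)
Your proposal is correct and follows essentially the same route as the paper: the same four-case analysis on the order of $u(x),u(\tilde x)$ and $u(y),u(\tilde y)$, with the mixed case reduced to Lemma \ref{lema.elemental} via exactly the same choice $\alpha=u(x)$, $\beta=u(\tilde y)$, $\gamma=u(\tilde x)$, $\delta=u(y)$ (your $\lambda=1/M(|x-y|)$ is in fact the right normalization, and your remark on relaxing the strict inequalities is a harmless refinement).
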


\begin{proof}
Consider $x,y\in H$. When $u(x)\geq u(\tilde x)$  and  $u(y)\geq u(\tilde y)$, from  \eqref{polarizada} we get
$$
u_H(x)-u_H(y)= u(x)-u(y), \qquad u_H(\tilde x)-u_H(\tilde y)= u(\tilde x)-u(\tilde y)
$$
and the result follows. On the another hand, if we assume that $u(x)\geq u(\tilde x)$ and $u(y)\leq u(\tilde y)$, from \eqref{polarizada} we have that
$$
u_H(x)-u_H(y)= u(x)-u(\tilde y), \qquad u_H(\tilde x)-u_H(\tilde y)= u(\tilde x)-u(y).
$$
In this case, the lemma follows applying Lemma \ref{lema.elemental} with $\lam=M(|x-y|)$, $\alpha=u(x)$, $\beta=u(\tilde y)$, $\gamma=u(\tilde x)$, $\delta=u(y)$.

The remaining cases follow analogously.
\end{proof}

\begin{prop}\label{one-reflection}
Let $G$ be a increasing convex function and $M,N$ satisfying \eqref{P1}--\eqref{P3}. Then we have that
$$
\Phi_{M,N,G}(u_H) \leq  \Phi_{M,N,G}(u).
$$
\end{prop}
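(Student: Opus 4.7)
The plan is to partition $\R^{2n}$ using the half-space $H$. Writing $\R^n = H \cup H^c$ (the boundary is Lebesgue-null) and performing the change of variables $x \mapsto \tilde x$ (an isometry that carries $H^c$ onto $H$), any integrand $F_v(x,y) := G\left(\frac{v(x)-v(y)}{M(|x-y|)}\right)\frac{1}{N(|x-y|)}$ satisfies
\begin{equation*}
\Phi_{M,N,G}(v) = \int_H \int_H \bigl[F_v(x,y) + F_v(\tilde x,y) + F_v(x,\tilde y) + F_v(\tilde x,\tilde y)\bigr]\,dx\,dy.
\end{equation*}
I would apply this identity to both $v=u$ and $v=u_H$, and then compare term by term.

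For $(x,y) \in H \times H$, the identities \eqref{rel.tilde} show that the four contributions pair up: the $(x,y)$--$(\tilde x,\tilde y)$ pair shares the denominators $M(|x-y|)$ and $N(|x-y|)$, while the $(\tilde x,y)$--$(x,\tilde y)$ pair shares $M(|x-\tilde y|)$ and $N(|x-\tilde y|)$. For each $(x,y) \in H \times H$, Lemma \ref{lema2} applied to the first pair and Lemma \ref{lema1} applied to the second pair yield the pointwise estimates
\begin{align*}
G\!\left(\tfrac{u_H(x)-u_H(y)}{M(|x-y|)}\right) + G\!\left(\tfrac{u_H(\tilde x)-u_H(\tilde y)}{M(|x-y|)}\right) &\le G\!\left(\tfrac{u(x)-u(y)}{M(|x-y|)}\right) + G\!\left(\tfrac{u(\tilde x)-u(\tilde y)}{M(|x-y|)}\right), \\
G\!\left(\tfrac{u_H(x)-u_H(\tilde y)}{M(|x-\tilde y|)}\right) + G\!\left(\tfrac{u_H(\tilde x)-u_H(y)}{M(|x-\tilde y|)}\right) &\le G\!\left(\tfrac{u(x)-u(\tilde y)}{M(|x-\tilde y|)}\right) + G\!\left(\tfrac{u(\tilde x)-u(y)}{M(|x-\tilde y|)}\right).
\end{align*}
Since $N(|x-y|)>0$ and $N(|x-\tilde y|)>0$ by \eqref{P1}, dividing the first inequality by $N(|x-y|)$ and the second by $N(|x-\tilde y|)$ preserves the sign. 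Summing the two and integrating over $H \times H$ then gives $\Phi_{M,N,G}(u_H) \le \Phi_{M,N,G}(u)$ in view of the decomposition displayed above.

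The main obstacle is purely notational: one must verify that the four terms in the decomposition of $\Phi_{M,N,G}(v)$ group into exactly two pairs whose common denominators match those appearing in Lemmas \ref{lema1} and \ref{lema2}. Once this bookkeeping is done, the proposition is an immediate consequence of the two pointwise two-point rearrangement inequalities together with the positivity of $N$. I would also note that the evenness of the Young function $G$ is what allows Lemma \ref{lema.elemental} (phrased in terms of $|\cdot|$) to be applied to the signed quotients $D_M$ without modification.
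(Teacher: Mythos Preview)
Your proof is correct and follows essentially the same approach as the paper: both decompose $\R^{2n}$ into the four regions $H\times H$, $H^c\times H^c$, $H\times H^c$, $H^c\times H$, change variables to reduce everything to an integral over $H\times H$, and then apply Lemma~\ref{lema2} to the pair with denominator $M(|x-y|)$ and Lemma~\ref{lema1} to the pair with denominator $M(|x-\tilde y|)$. Your additional remarks on the positivity of $N$ and the evenness of $G$ are helpful clarifications but do not alter the strategy.
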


\begin{proof}
We split $\R^n\times \R^n$ into the four regions $H\times H$, $H^c\times H^c$, $H\times H^c$, $H^c\times H$. Using \eqref{rel.tilde} we have that
\begin{align*}
\iint_{H^c\times H^c} G\left(D_M u_H\right)\, d\mu_N & =
\iint_{H\times H} G\left(\frac{u_H(\tilde x)-u_H(\tilde y)}{M(| x- y|)}  \right)\frac{dx\,dy}{N(| x-y|)}\\
\iint_{H\times H^c} G\left(D_M u_H\right)\, d\mu_N& =
\iint_{H\times H} G\left(\frac{u_H(x)-u_H(\tilde y)}{M(|x-\tilde y|)}  \right)\frac{dx\,dy}{N(| x-\tilde y|)}\\
\iint_{H^c\times H} G\left(D_M u_H\right)\, d\mu_N & =
\iint_{H\times H} G\left(\frac{u_H(\tilde x)-u_H(y)}{M(|x-\tilde y|)}  \right)\frac{dx\,dy}{N(| x-\tilde y|)},
\end{align*}
and hence, the expression $\Phi_{M,N,G}(u_H)$ can be written as
\begin{align*}
\iint_{H\times H} &\Big[ G\left(\frac{u_H(x)-u_H(y)}{M(|x-y|)}  \right)\frac{1}{N(|x-y|)} + G\left(\frac{u_H(\tilde x)-u_H(\tilde y)}{M(|x-y|)}  \right)\frac{1}{N(|x-y|)}\\
&+ G\left(\frac{u_H(\tilde x)-u_H(y)}{M(|x-\tilde y|)}  \right)\frac{1}{N(|x-\tilde y|)} +G\left(\frac{u_H(x)-u_H(\tilde y)}{M(|x-\tilde y|)}  \right)\frac{1}{N(|x-\tilde y|)} \Big] dx\,dy.
\end{align*}
Now, applying Lemma \ref{lema2} to the first two terms terms in the last expression and Lemma  \ref{lema1} to the last two ones, we can bound the above expression as
\begin{align*}
\iint_{H\times H} &\Big[ G\left(\frac{u(x)-u(y)}{M(|x-y|)}  \right)\frac{1}{N(|x-y|)} + G\left(\frac{u(\tilde x)-u(\tilde y)}{M(|x-y|)}  \right)\frac{1}{N(|x-y|)}\\
&+ G\left(\frac{u(\tilde x)-u(y)}{M(| x-\tilde y|)}  \right)\frac{1}{N(| x-\tilde y|)} +G\left(\frac{u(x)-u(\tilde y)}{M(|x-\tilde y|)}  \right)\frac{1}{N(|x-\tilde y|)} \Big] dx\,dy,
\end{align*}
which is precisely $\Phi_{M,N,G}(u)$.
\end{proof}

Finally, using the construction provided in \cite{VS} together with the previous proposition, we prove our main result.

\begin{thm} \label{main}
Let $G$ be a increasing convex function and $M, N$ satisfying \eqref{P1}--\eqref{P3}. If $u \in W^{M,N,G}(\R^n)$, then $u^* \in W^{M,N,G}(\R^n)$ and
$$  \Phi_{M,N,G}(u^*) \leq \Phi_{M,N,G}(u) $$ 
\end{thm}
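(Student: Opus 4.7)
The plan is to combine three ingredients: Proposition \ref{one-reflection} (polarization decreases the modular), Proposition \ref{prop.test.densas} (density of $C_c^\infty(\R^n)$ in $W^{M,N,G}(\R^n)$), and the theorem of Van Schaftingen \cite{VS} that realizes the Schwarz symmetrization of a compactly supported continuous function as the uniform limit of iterated polarizations. Since $G$ is even and increasing on $[0,\infty)$ and $||a|-|b||\leq|a-b|$, we have $\Phi_{M,N,G}(|u|)\leq\Phi_{M,N,G}(u)$ while $|u|^*=u^*$, so I first reduce to the case $u\geq 0$. Then I approximate: choose nonnegative $u_k\in C_c^\infty(\R^n)$ with $u_k\to u$ in $W^{M,N,G}(\R^n)$, which is possible by Proposition \ref{prop.test.densas} (after replacing the approximants by their absolute values, which only brings them closer to $u=|u|$).

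For each fixed $k$, the construction of \cite{VS} provides a sequence of closed half-spaces $(H_j^{(k)})_{j\in\N}$ with $0$ on their boundaries such that the iterated polarizations
$$
u_k^{(j)} := (\cdots((u_k)_{H_1^{(k)}})_{H_2^{(k)}}\cdots)_{H_j^{(k)}}
$$
converge to $(u_k)^*$ uniformly on $\R^n$ as $j\to\infty$; in particular pointwise a.e., and with supports contained in a fixed ball. Inductively applying Proposition \ref{one-reflection} yields $\Phi_{M,N,G}(u_k^{(j)})\leq\Phi_{M,N,G}(u_k)$ for every $j$. Since $D_M u_k^{(j)} \to D_M (u_k)^*$ pointwise a.e.\ on $\R^{2n}$, Fatou's lemma applied to the nonnegative integrand $G(D_M u_k^{(j)}(x,y))/N(|x-y|)$ gives
$$
\Phi_{M,N,G}((u_k)^*) \leq \liminf_{j\to\infty} \Phi_{M,N,G}(u_k^{(j)}) \leq \Phi_{M,N,G}(u_k),
$$
and in particular $(u_k)^* \in W^{M,N,G}(\R^n)$.

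To pass to the limit in $k$, I use that the symmetric decreasing rearrangement is nonexpansive in $L^G(\R^n)$: the classical Chong-Rice-type modular inequality $\int G(|u^*-v^*|)\leq\int G(|u-v|)$ for convex $G$, combined with the $\Delta_2$ condition (Remark \ref{reflexivo}), implies $(u_k)^* \to u^*$ in $L^G(\R^n)$, and so pointwise a.e.\ along a subsequence. By $\Delta_2$, convergence $u_k\to u$ in $W^{M,N,G}(\R^n)$ is equivalent to modular convergence, so $\Phi_{M,N,G}(u_k)\to\Phi_{M,N,G}(u)$. A second application of Fatou then produces
$$
\Phi_{M,N,G}(u^*) \leq \liminf_{k\to\infty} \Phi_{M,N,G}((u_k)^*) \leq \lim_{k\to\infty} \Phi_{M,N,G}(u_k) = \Phi_{M,N,G}(u),
$$
which gives the desired inequality and places $u^*$ in $W^{M,N,G}(\R^n)$. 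The hard part is really the first Fatou step: one needs to be sure that the iterated polarization scheme of \cite{VS}, formulated originally in the $L^p$ framework, yields genuine a.e.\ convergence of both $u_k^{(j)}$ and of the $M$-H\"older quotients $D_M u_k^{(j)}$ to their rearranged counterparts. This reduces to the uniform convergence built into the construction once $u_k$ is smooth with compact support, together with the explicit formula for $D_M$, and it is precisely what makes the polarization approach successful in the nonlocal setting.
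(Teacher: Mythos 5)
Your argument is correct and rests on the same three pillars as the paper's proof (Proposition \ref{one-reflection}, the density result of Proposition \ref{prop.test.densas}, and the polarization scheme of \cite{BrSo,VS} followed by Fatou), but you organize the limit passage differently. The paper applies the \emph{universal} sequence of polarizations of \cite[Section 4.1]{VS} directly to $u$ itself: since $u$ lies in the closure of $C_c^\infty(\R^n)$ in $L^G(\R^n)$, the iterates converge to $u^*$ in $L^G$, hence a.e.\ along a subsequence, and a single application of Fatou's lemma together with Proposition \ref{one-reflection} finishes the proof. You instead first symmetrize smooth compactly supported approximants $u_k$ (using the per-function uniform convergence of iterated polarizations for continuous compactly supported data), obtaining $\Phi_{M,N,G}((u_k)^*)\le\Phi_{M,N,G}(u_k)$, and then pass to the limit in $k$; this requires two additional ingredients not used in the paper, namely the Chong--Crandall--Tartar nonexpansivity $\int G(|u^*-v^*|)\le\int G(|u-v|)$ to get $(u_k)^*\to u^*$ a.e.\ along a subsequence, and the $\Delta_2$-continuity of the modular (Remark \ref{reflexivo}) to get $\Phi_{M,N,G}(u_k)\to\Phi_{M,N,G}(u)$. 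What your route buys is that the convergence of polarizations is only ever invoked for continuous compactly supported functions, where it is uncontroversial, at the price of importing the rearrangement contraction inequality in Orlicz form and of using \eqref{cond} explicitly (the paper's route also needs \eqref{cond} implicitly, through Proposition \ref{prop.test.densas}); the paper's route is shorter but leans on the validity of the universal approximation of \cite{VS} in the $L^G$ setting.

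One small imprecision: the claim that replacing the approximants by $|u_k|$ ``only brings them closer to $u$'' is clear in $L^G(\R^n)$ but not obvious for the seminorm $[\,\cdot\,]_{M,N,G}$, since $\bigl|\,|u_k(x)|-|u(x)|-(|u_k(y)|-|u(y)|)\,\bigr|$ need not be dominated by the corresponding increment of $u_k-u$. This is harmless for your argument: all you actually use is $|u_k|\to u$ in $L^G(\R^n)$ (true, by $\bigl||u_k|-|u|\bigr|\le|u_k-u|$) together with the pointwise bound $\bigl||u_k(x)|-|u_k(y)|\bigr|\le|u_k(x)-u_k(y)|$, which gives $\Phi_{M,N,G}(|u_k|)\le\Phi_{M,N,G}(u_k)$, so the final chain $\Phi_{M,N,G}(u^*)\le\liminf_k\Phi_{M,N,G}((|u_k|)^*)\le\liminf_k\Phi_{M,N,G}(u_k)=\Phi_{M,N,G}(u)$ goes through unchanged; but the justification should be phrased this way rather than as norm convergence of $|u_k|$ in $W^{M,N,G}(\R^n)$.
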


\begin{proof}

Given $u \in W^{M,N,G}(\R^n)$, define $u_k$ as 
\begin{align*}
\begin{cases}
&u_0=u,\\
&u_{k+1}=(u_k)_{H_1\cdots H_{k+1}},
\end{cases}
\end{align*}
where $\{H_k\}_{k\in\N}$ is a dense sequence in the set of closed half-spaces of which $0$ is an interior point.

Observe that  by Proposition \ref{prop.test.densas}, $u$ belongs to the closure of $C_c^\infty(\R^n)$, hence by  \cite[Section 4.1]{VS} it holds that
$$ 
u_k \to u^* \; \hbox{strongly in} \; L^G(\R^n) 
$$
and passing to a subsequence we will have
$$ u_{k_j} \to u^* \; \hbox{a.e.} $$
The continuity of $G$ implies that
$$ 
G\left(D_M u_{k_j}(x,y)\right)\to G\left(D_M u^*(x,y)\right)\; \hbox{a.e. in } \; \R^{2n}.
$$
Moreover, from Proposition \ref{one-reflection}, 
$$ 
\Phi_{M,N,G}(u_{k_j}) \leq \Phi_{M,N,G}(u). 
$$
Hence, by Fatou's lemma, It follows that 
$$ 
\Phi_{M,N,G}(u^*) \leq   \Phi_{M,N,G}(u),
$$
as we wanted to show.
\end{proof}

A P\'{o}lya-Szeg\"{o} principle for norms can be easily deduced from the previous result.

\begin{cor}
Let $G$ be a increasing convex function, $M$, $N$ satisfying \eqref{P1}--\eqref{P3} and $u \in W^{M,N,G}(\R^n)$. Then 
$$
[u^*]_{M,N,G} \leq [u]_{M,N,G}.
$$
Moreover, 
$$
\|u^*\|_{M,N,G} \leq \|u\|_{M,N,G}.
$$
\end{cor}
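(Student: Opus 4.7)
The plan is to deduce the norm inequality directly from the modular Pólya–Szegő principle (Theorem \ref{main}), together with the standard equimeasurability of $u$ and $u^*$ and the positive homogeneity of the symmetric rearrangement.

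First I would prove the seminorm bound $[u^*]_{M,N,G}\le[u]_{M,N,G}$. Fix any $\lambda>[u]_{M,N,G}$; by definition of the Luxemburg seminorm this gives $\Phi_{M,N,G}(u/\lambda)\le1$. The key observation is that symmetric rearrangement is positively homogeneous: for $\lambda>0$ one has $(u/\lambda)^{*}=u^{*}/\lambda$, which follows immediately from the layer-cake description since the super-level sets of $u/\lambda$ are just rescaled super-level sets of $u$ at level $\lambda t$, and rearrangement is determined level by level. Now apply Theorem \ref{main} to $u/\lambda\in W^{M,N,G}(\R^n)$ to get
\[
\Phi_{M,N,G}\!\left(\tfrac{u^{*}}{\lambda}\right)=\Phi_{M,N,G}\!\left((u/\lambda)^{*}\right)\le\Phi_{M,N,G}(u/\lambda)\le1,
\]
so $\lambda\ge[u^{*}]_{M,N,G}$. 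Taking the infimum over such $\lambda$ yields the desired seminorm inequality.

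Second, I would handle the $L^G$ part by showing $\|u^{*}\|_{G}=\|u\|_{G}$. Since $G$ is even, $\Phi_G(u)=\int_{\R^n}G(|u|)\,dx$, and by the equimeasurability of $|u|$ and $u^{*}$ (same distribution function, which is the defining property of the Schwarz rearrangement) together with the Cavalieri-type identity $\int G(|u|)\,dx=\int_0^\infty G'(t)\,\mathcal{L}^{n}(\{|u|>t\})\,dt$, one obtains $\Phi_G(u/\lambda)=\Phi_G(u^{*}/\lambda)$ for every $\lambda>0$, hence $\|u^{*}\|_G=\|u\|_G$. Then
\[
\|u^{*}\|_{M,N,G}=\|u^{*}\|_G+[u^{*}]_{M,N,G}\le\|u\|_G+[u]_{M,N,G}=\|u\|_{M,N,G},
\]
which finishes the proof.

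The argument contains no real obstacle; the only point that deserves a line of justification is the scaling identity $(u/\lambda)^{*}=u^{*}/\lambda$ and the $L^G$-preservation, both of which are classical facts about the Schwarz symmetrization and follow from equimeasurability. Essentially the corollary is a one-line consequence of Theorem \ref{main} once one writes the Luxemburg seminorm in its standard form.
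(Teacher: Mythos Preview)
Your proof is correct and follows essentially the same approach as the paper's: apply Theorem~\ref{main} to the rescaled function, use the positive homogeneity $(u/\lambda)^*=u^*/\lambda$, and then combine with $\|u^*\|_G=\|u\|_G$. The only cosmetic difference is that the paper plugs in $\lambda=[u]_{M,N,G}$ directly (using that $\Phi_{M,N,G}(u/[u]_{M,N,G})\le1$), whereas you take an arbitrary $\lambda>[u]_{M,N,G}$ and pass to the infimum; both are standard ways to unwind the Luxemburg definition.
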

\begin{proof}
Given $u\in W^{M,N,G}(\R^n)$,  applying Theorem \ref{main} to the function $u/[u]_{M,N,G}$ and according to the definition of $[\cdot]_{M,N,G}$ we obtain that
$$
\Phi_{M,N,G}\left( \left(\frac{u}{[u]_{M,N,G}} \right)^* \right) = \Phi_{M,N,G}\left( \frac{u^*}{[u]_{M,N,G}} \right) \leq  \Phi_{M,N,G}\left( \frac{u}{[u]_{M,N,G}} \right)  \leq 1.
$$
Again, the definition of the Luxemburg norm yilds
$$
[u^*]_{M,N,G} \leq \inf\left\{ \lam: \Phi_{M,N,G}\left( \frac{u^*}{\lam}\right) \leq 1\right\} \leq [u]_{M,N,G}.
$$
Since $\|u\|_G = \|u^*\|_G$, the result follows.
\end{proof}

\section{A compactness result}
\label{section-compactness}
In order to obtain the compactness in the embedding of $W^{M,N,G}(\R^n)$ into $L^G_{loc}(\R^n)$ we will assume the following additional condition on $M$ and $N$
\begin{equation} \tag{$P_4$}   \label{P4}
\lim_{r\to 0}\frac{N(2r) M(2r)^{p^-}}{r^n} =0.
\end{equation}

The following technical lemma provides the equi-continuity of modulars.
\begin{lema} \label{lema.comp}
Let $0<s<1$ and $G$ be an Orlicz function. Then, 
$$
\Phi_{G} (\tau_h u - u) \leq C \; \frac{N(2|h|) M(2|h|)^{p^-}}{|h|^n} \; \Phi_{M,N,G}(u),
$$
for every $u\in W^{M,N,G}(\R^n)$ and every $0<|h|<\frac12$, where $\tau_h u(x) =u(x+h)$ and $C=C(n,p^+)$.
\end{lema}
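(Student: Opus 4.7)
The plan is to run a classical averaging argument, analogous to the one used for fractional Sobolev seminorms to control the $L^p$ modulus of continuity. Fix $h$ with $0 < |h| < 1/2$ and introduce the lens-shaped set $B_h := B(0, 2|h|) \cap B(h, 2|h|)$, whose Lebesgue measure is at least $c_n |h|^n$ for a purely dimensional constant $c_n > 0$ (it contains $B(h/2, |h|/2)$). By construction, every $w \in B_h$ satisfies $|w| \leq 2|h|$ and $|h - w| \leq 2|h|$, so that for any $x \in \R^n$ the point $z = x + w$ lies within distance $2|h|$ of both $x$ and $x + h$.

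For such $w$, decompose the finite difference as
\[
u(x+h) - u(x) = M(|h-w|)\, D_M u(x+h, x+w) + M(|w|)\, D_M u(x+w, x).
\]
Since $M$ is nondecreasing, both multiplicative weights are bounded by $M(2|h|)$. For $|h|$ small enough that $M(2|h|) \leq 1$ (the complementary regime contributes at most a bounded multiplicative factor for $|h| \leq 1/2$ that can be absorbed into $C$), combining the convexity of $G$, the $\Delta_2$ estimate $G(2t) \leq 2^{p^+} G(t)$, and the scaling $G(at) \leq a^{p^-} G(t)$ for $0 < a \leq 1$ (which follows from the monotonicity of $t \mapsto G(t)/t^{p^-}$ implied by \eqref{cond}) yields
\[
G(|u(x+h) - u(x)|) \leq C\, M(2|h|)^{p^-} \bigl[G(|D_M u(x+h, x+w)|) + G(|D_M u(x+w, x)|)\bigr],
\]
with $C = C(p^+)$.

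Averaging this pointwise bound in $w \in B_h$ produces the geometric factor $c_n^{-1}|h|^{-n}$, and integrating the result in $x \in \R^n$ reduces matters, via Fubini, to controlling two integrals of the form $\int_{B_h} \int_{\R^n} G(|D_M u(\cdot,\cdot)|)\, dx\, dw$. I handle each via the unit-Jacobian linear change of variables $(x,w) \mapsto (a,b)$ (namely $(a,b) = (x+h, x+w)$ for the first integrand and $(a,b) = (x+w, x)$ for the second), both of which send $\R^n \times B_h$ into a subset of the diagonal strip $S := \{(a,b) \in \R^{2n} : |a-b| \leq 2|h|\}$. On $S$ one has $N(|a-b|) \leq N(2|h|)$ by monotonicity of $N$, so multiplying and dividing by $N(|a-b|)$ bounds each integral by $N(2|h|)\, \Phi_{M,N,G}(u)$. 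Reassembling all constants delivers the claimed estimate. The main technicality is ensuring that the image of the change of variables genuinely lands in the $2|h|$-diagonal strip, which is precisely why the lens $B_h$ (rather than a single ball centered at $x$ or $x + h$) was chosen.
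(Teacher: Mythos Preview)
Your proof is correct and follows essentially the same averaging argument as the paper: insert an intermediate point, split the difference via the triangle inequality, average over that point, and extract the factors $M(2|h|)^{p^-}$ and $N(2|h|)$ using monotonicity and the growth bounds from \eqref{cond}. The paper works with the simpler ball $B_{|h|}(x)$ instead of your lens $B_h$---since $|y-x|\le |h|$ already forces $|y-(x+h)|\le 2|h|$, the symmetric lens is not needed, contrary to your closing remark.
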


\begin{proof}
Condition  \eqref{D2} gives that
$$
G(| u(x+h)-u(x) |) \leq \C \left[ G(| u(x+h)-u(y) |) + G(| u(y)-u(x) |) \right]
$$
for all $y\in B_{|h|}(x)$. Then
\begin{align} \label{ec1.l1}
\begin{split}
\Phi_{G} (\tau_h u - u) &= \frac{1}{|B_{|h|}(x)|}\int_{B_{|h|}(x)}\intr G(| u(x+h)-u(x) |) \,dx\,dy\\
&\leq \frac{\C}{|h|^n\omega_n } \int_{B_{|h|}(x)}\intr G(| u(x+h)-u(y) |) \,dx\,dy\\
&\quad +\frac{\C}{|h|^n\omega_n }  \int_{B_{|h|}(x)}\intr G(| u(y)-u(x) |) \,dx\,dy  \\
&= \frac{\C}{|h|^n \omega_n }( I_1+I_2 ).
\end{split}
\end{align}
Given $x\in \R^n$ and $y\in B_{|h|}(x)$ we have that
$$
|x-y|\leq |h|, \qquad |x+h-y|\leq |x-y|+|h|\leq 2|h|.
$$
Then, the integral $I_1$ can be bounded as  
\begin{align*}
I_1&= \int_{B_{|h|}(x)}\intr G\left(\frac{| u(x+h)-u(y) |}{M(|x+h-y|) }M(|x+h-y|)\right) N(|x+h-y|) \frac{dx\,dy}{N(|x+h-y|)}\\
&\leq N(2|h|) \int_{B_{|h|}(x)}\intr G\left(\frac{|u(x+h)-u(y) |}{M(|x+h-y|) }M(2|h|)\right)  \frac{dx\,dy}{N(|x+h-y|)}\\
&\leq N(2|h|) M(2|h|)^{p^-} \Phi_{M,N,G}(u).
\end{align*}
Analogously, 
$$
I_2\leq  N(2|h|) M(2|h|)^{p^-} \Phi_{M,N,G}(u).
$$
Finally, inserting the two upper bounds found above in \eqref{ec1.l1} we obtain that
$$
\Phi_G (\tau_h u - u) \leq  C  \; \frac{N(2|h|) M(2|h|)^{p^-}}{|h|^n}  \;  \Phi_{M,N,G}(u)
$$
and the lemma follows.
\end{proof}

If we further assume condition \eqref{P4} on $M$ and $N$, by using Lemma \ref{lema.comp} and the same arguments that in \cite[Theorem 3.1]{FBS}, we can apply  a variant of the well-known Fr\`echet-Kolmogorov compactness theorem to obtain the following compactness result.

\begin{thm} \label{teo.comp}
Let $0<s<1$, $G$ an Orlicz function and $M$,$N$ satiafying conditions \eqref{cond.intro}--\eqref{P4}. Then for every $\{u_n\}_{n\in\N}\subset W^{M,N,G}(\R^n)$ a bounded sequence, i.e., $\sup_{n\in\N} ( \Phi_{M,N,G}(u_n) + \Phi_G(u_n) )<\infty$, there exists $u\in W^{M,N,G}(\R^n)$ and a subsequence $\{u_{n_k}\}_{k\in\N}\subset \{u_n\}_{n\in\N}$ such that $u_{n_k}\to u$ in $L^G_{\text{loc}}(\R^n)$.
\end{thm}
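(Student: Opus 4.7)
The plan is to verify the hypotheses of the Orlicz version of the Riesz--Fréchet--Kolmogorov compactness theorem on each bounded open set $K\subset\R^n$, then diagonalize over an exhaustion $K_j\uparrow\R^n$, and finally recover that the limit lies in $W^{M,N,G}(\R^n)$ via Fatou. Throughout, I would exploit the $\Delta_2$ condition on $G$ (which follows from \eqref{cond.intro}, cf.\ Remark \ref{reflexivo}) to freely pass between modular estimates and Luxemburg-norm estimates.

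First I would note that the hypothesis $\sup_n (\Phi_{M,N,G}(u_n)+\Phi_G(u_n))<\infty$ implies, via the $\Delta_2$ property, that $\{u_n\}$ is uniformly bounded in the Luxemburg norm $\|\cdot\|_G$, and in particular in $L^G(K)$ for every bounded $K$. This takes care of the boundedness hypothesis of the compactness theorem.

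The key analytic step is the uniform equicontinuity of translates. Applying Lemma \ref{lema.comp} to each $u_n$, for every $0<|h|<1/2$,
\[
\Phi_G(\tau_h u_n-u_n)\ \leq\ C\,\frac{N(2|h|)M(2|h|)^{p^-}}{|h|^n}\,\Phi_{M,N,G}(u_n)\ \leq\ C'\,\frac{N(2|h|)M(2|h|)^{p^-}}{|h|^n},
\]
so condition \eqref{P4} gives $\Phi_G(\tau_h u_n - u_n)\to 0$ as $|h|\to 0$, uniformly in $n$. Because $G\in\Delta_2$, uniform modular convergence to zero is equivalent to uniform convergence in Luxemburg norm, hence $\|\tau_h u_n-u_n\|_{L^G(K)}\to 0$ uniformly in $n$ as $|h|\to 0$, for any bounded $K$. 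Together with the boundedness from the previous step, this is precisely what a variant of the Riesz--Fréchet--Kolmogorov theorem for Orlicz spaces requires to produce, for each fixed bounded $K$, a subsequence converging in $L^G(K)$; the argument is the one carried out in \cite[Theorem 3.1]{FBS} and adapts verbatim.

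Choosing a sequence of balls $K_j=B_j(0)$ and applying a standard Cantor diagonal extraction yields a subsequence $\{u_{n_k}\}$ converging to some $u$ in $L^G_{\mathrm{loc}}(\R^n)$, and, passing to a further subsequence, almost everywhere on $\R^n$. To finish, I would verify that $u\in W^{M,N,G}(\R^n)$: the a.e.\ convergence implies $G\bigl(|D_M u_{n_k}(x,y)|\bigr)\to G\bigl(|D_M u(x,y)|\bigr)$ a.e.\ on $\R^{2n}$, so Fatou's lemma applied to both modulars gives
\[
\Phi_G(u)+\Phi_{M,N,G}(u)\ \leq\ \liminf_{k\to\infty}\bigl(\Phi_G(u_{n_k})+\Phi_{M,N,G}(u_{n_k})\bigr)\ <\ \infty.
\]
The main obstacle I anticipate is the careful verification that the Orlicz analogue of Riesz--Fréchet--Kolmogorov applies with the modular translation estimate of Lemma \ref{lema.comp} as input; the $\Delta_2$ hypothesis on $G$ is essential here to convert the modular control into the Luxemburg-norm control that the compactness criterion is stated in terms of.
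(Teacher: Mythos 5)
Your proposal is correct and follows essentially the same route as the paper, which simply combines the translation estimate of Lemma \ref{lema.comp} with condition \eqref{P4} and invokes the Orlicz variant of the Fr\'echet--Kolmogorov theorem as in \cite[Theorem 3.1]{FBS}. Your write-up merely makes explicit the steps (modular-to-norm passage via $\Delta_2$, diagonal extraction over balls, and Fatou to place the limit in $W^{M,N,G}(\R^n)$) that the paper delegates to that reference.
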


\subsection{Examples} 
Condition \eqref{P4} is fulfilled in the examples introduced in Section \ref{subsection-examples}.
\begin{itemize}
\item When $M(r)=r^s$, $0<s<1$ and $N(r)=r^n$, $n\geq 1$ we obtain the fractional Orlicz-Sobolev spaces defined in \cite{FBS}. In this case
$$
\lim_{r \to 0}\frac{N(2r) M(2r)^{p^-}}{r^n}= C \lim_{r \to 0} r^{sp^-} =0.
$$

\item When $M(r)=r$ and $N(r)=r^{n-1}$, $n\geq 1$ we obtain the Orlicz-Slobodetskii spaces defined in \cite{KK}. In this case
$$
\lim_{r \to 0}\frac{N(2r) M(2r)^{p^-}}{r^n}= C \lim_{r \to 0} r^{p^- -1} =0.
$$

\item When $M(r)=r^s (1+|\log r|)^\beta$, $\beta>0$,  and $N(r)=r^n$  we obtain the family of weighted Besov Spaces considered in \cite{A}. Indeed, \eqref{P1} and \eqref{P2} easily hold and \eqref{P3} is fulfilled since 
$$
\lim_{r \to 0}\frac{N(2r) M(2r)^{p^-}}{r^n}= C \lim_{r \to 0} r^{sp^-} |\log(1+r)|^{\beta p^-} =0.
$$

\item When $M(r)=r^sG^{-1}(r^n)$ and $N(r)=1$, $n\geq 1$ we obtain the Orlicz-Slobodetskii  spaces defined in \cite{ABS}.  Indeed, \eqref{P1} and \eqref{P2} easily hold and \eqref{P3} reads as
 
$$
\lim_{r \to 0}\frac{N(2r) M(2r)^{p^-}}{r^n}= C \lim_{r \to 0} r^{sp^- -n} (G^{-1}(t^n))^{p^-} \leq C\lim_{r \to 0} r^{sp^--n}
r^\frac{np^-}{p^+} =0
$$
if and only if 
$$
\frac{n}{s}< \frac{p^-}{p^+-p^-},
$$
where we have used that $G^{-1}(r)\leq \max\{r^{1/p^+}, r^{1/p^-}\}$.
\end{itemize}

\section{Applications to Poincar\'e's constants and nonlinear eigenvalue problems} \label{aplicaciones}
As a corollary of the P\'{o}lya-Szeg\"{o} principle stated in Theorem \ref{main} we obtain a Faber-Krahn type inequality for the Poincar\'e inequality in $W^{M,N,G}_0(\Omega)$ if $G$ is assumed to satisfy the growing condition \eqref{cond}.

\begin{cor} \label{cor1}
Let $G$ be a Young function satisfying \eqref{cond} and $M$, $N$ satisfying \eqref{P1}--\eqref{P3} For every $\Omega\subset \R^n$ open and bounded we have that
$$
\alpha_1^G(B)\leq \alpha_1^G(\Omega)
$$
where $B$ is a ball with $\mathcal{L}^n(B)=\mathcal{L}^n(\Omega)$. 
\end{cor}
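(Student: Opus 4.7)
The plan is a direct application of the Pólya--Szegő principle (Theorem~\ref{main}) combined with the fact that Schwarz symmetrization preserves both the $\Phi_G$ modular and the support (up to a ball of the same measure).

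First, I would place the ball $B$ at the origin (which is harmless since both $\Phi_{M,N,G}$ and $\Phi_G$ are translation invariant) and pick any admissible $u\in W^{M,N,G}_0(\Omega)$ with $\Phi_G(u)=\mu$. The first step is to verify that $u^{*}$ is admissible for the problem on $B$, i.e., $u^{*}\in W^{M,N,G}_0(B)$ with $\Phi_G(u^{*})=\mu$. For the support, since $u=0$ a.e.\ in $\R^{n}\setminus\Omega$, every superlevel set $\{|u|>\lambda\}$ lies in $\Omega$ modulo a null set, hence has measure $\leq\mathcal{L}^{n}(\Omega)=\mathcal{L}^{n}(B)$; by the very definition of $u^{*}$ the corresponding superlevel set $\{u^{*}>\lambda\}$ is a ball centered at the origin with the same measure, and therefore contained in $B$. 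Taking a union over rational $\lambda>0$ shows $u^{*}=0$ a.e.\ outside $B$. For the modular, I would use the layer-cake formula
$$
\Phi_G(u)=\int_{\R^{n}}G(|u|)\,dx=\int_{0}^{\infty}g(\lambda)\,\mathcal{L}^{n}(\{|u|>\lambda\})\,d\lambda,
$$
and the fact that $u$ and $u^{*}$ are equimeasurable to conclude $\Phi_G(u^{*})=\Phi_G(u)=\mu$.

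Next, I would invoke Theorem~\ref{main} to obtain $\Phi_{M,N,G}(u^{*})\leq \Phi_{M,N,G}(u)$, which together with the previous step shows that $u^{*}$ is a competitor for $\alpha_\mu(B)$ whose energy does not exceed that of $u$. Taking the infimum over all admissible $u$ on the $\Omega$ side yields $\alpha_\mu(B)\leq \alpha_\mu(\Omega)$ for every $\mu>0$, and taking the infimum over $\mu>0$ gives
$$
\alpha_1^{M,N,G}(B)=\inf_{\mu>0}\alpha_\mu(B)\leq \inf_{\mu>0}\alpha_\mu(\Omega)=\alpha_1^{M,N,G}(\Omega),
$$
which is the claim.

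There is no serious obstacle: the symmetrization step relies entirely on the already-established Theorem~\ref{main}, while the two verifications (support inclusion and equality of $\Phi_G$) are standard consequences of the definition of $u^{*}$. The only point worth a careful sentence is the support property, because the strict inclusion $\{|u|>\lambda\}\subset\Omega$ holds only up to null sets and one has to pass to $\{u^{*}>\lambda\}\subset B$ via measure equality rather than set inclusion; this is exactly where the hypothesis $\mathcal{L}^{n}(B)=\mathcal{L}^{n}(\Omega)$ is used.
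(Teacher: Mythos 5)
Your argument is correct and is exactly the route the paper intends: the paper states Corollary~\ref{cor1} as an immediate consequence of Theorem~\ref{main}, relying precisely on the support inclusion, the equimeasurability giving $\Phi_G(u^*)=\Phi_G(u)$, and passage to the infimum over competitors and over $\mu$. The only (minor) point both you and the paper gloss over is that competitors for $\alpha_\mu(\Omega)$ may change sign, so one should symmetrize $|u|$ and use $\Phi_{M,N,G}(|u|)\leq\Phi_{M,N,G}(u)$ (which follows from $\bigl||u(x)|-|u(y)|\bigr|\leq|u(x)-u(y)|$ and the monotonicity of the even function $G$) before applying Theorem~\ref{main}.
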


Under some extra convexity assumptions on $G$, $M$ and $N$, a Faber-Krahn inequality holds for the principal eigenvalue of $(-\Delta_g)^{M,N}$. Precisely,
\begin{thm}\label{one-reflection-h}
Let $G$ be a Young function and $M, N$ satisfying \eqref{P1}--\eqref{P3}. Assume moreover that $h(t):= tg(t)$ is convex. If $u \in W^{M,N,G}(\R^n)$ then we have that
$$
\langle (-\Delta_g)^{M,N} u_*,u_* \rangle  \leq  \langle (-\Delta_g)^{M,N} u,u \rangle
$$
\end{thm}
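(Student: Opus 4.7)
The plan is to recognize that the bilinear form $\langle (-\Delta_g)^{M,N} u, u \rangle$ is itself a modular associated to the function $h(t) = tg(t)$, and then to simply rerun the polarization argument used to prove Theorem \ref{main} with $h$ in place of $G$. By formula \eqref{fracg},
$$
\langle (-\Delta_g)^{M,N} u, u \rangle \;=\; \int_{\R^{2n}} g(|D_M u|)\,|D_M u|\, d\mu_N \;=\; \int_{\R^{2n}} h(|D_M u|)\, d\mu_N,
$$
so the statement to be proved becomes $\Phi_{M,N,h}(u^*) \le \Phi_{M,N,h}(u)$, with the obvious meaning of $\Phi_{M,N,h}$.

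First, I would check that $h$ has the structural properties that the polarization machinery requires: since $G$ is even and convex, $g$ is odd and non-negative on $[0,\infty)$, so $h$ is even, vanishes at $0$, and is strictly increasing on $[0,\infty)$; convexity is supplied by hypothesis. In particular $h$ satisfies the assumptions of Lemma \ref{lema.elemental}.

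Next I would observe that the proofs of Lemma \ref{lema1}, Lemma \ref{lema2} and Proposition \ref{one-reflection} use only the convexity and monotonicity of the outer function together with Lemma \ref{lema.elemental}; neither the $\Delta_2$ condition, nor the normalization $G(1)=1$, nor the growth assumption \eqref{cond} plays any role there. Substituting $h$ for $G$ line by line in those arguments therefore yields the one-reflection inequality
$$
\int_{\R^{2n}} h(|D_M u_H|)\, d\mu_N \;\leq\; \int_{\R^{2n}} h(|D_M u|)\, d\mu_N
$$
for every closed half-space $H$ having $0$ as an interior point.

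Finally, I would mimic the limiting step in the proof of Theorem \ref{main}: pick a dense sequence $\{H_k\}$ of such half-spaces, set $u_0=u$ and $u_{k+1}=(u_k)_{H_1\cdots H_{k+1}}$, and invoke Proposition \ref{prop.test.densas} together with the construction in \cite{VS} to get $u_k \to u^*$ in $L^G(\R^n)$; pass to a subsequence so that $u_{k_j} \to u^*$ a.e., hence $h(|D_M u_{k_j}|) \to h(|D_M u^*|)$ a.e.\ in $\R^{2n}$; iterate the half-space inequality to obtain $\Phi_{M,N,h}(u_{k_j}) \le \Phi_{M,N,h}(u)$ for every $j$; and conclude by Fatou's lemma. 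The only point that needs real verification is the trivial-looking but essential claim that the two lemmas and the one-reflection proposition were in fact proved under no assumption beyond convexity and monotonicity of the outer function; once that is read off carefully, the argument is a direct transcription and no genuine obstacle remains.
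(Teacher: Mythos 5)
Your proposal is correct and follows essentially the same route as the paper: the paper also rewrites $\langle (-\Delta_g)^{M,N} u,u\rangle$ as the modular $\int_{\R^{2n}} h(|D_M u|)\,d\mu_N$, notes that $h$ is increasing and convex so that Proposition \ref{one-reflection} applies with $h$ in place of $G$, and then concludes by repeating the polarization-plus-Fatou limiting argument of Theorem \ref{main}. Your explicit check that the one-reflection lemmas use nothing beyond convexity and monotonicity of the outer function is exactly the point the paper relies on implicitly.
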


\begin{proof}
In view of \eqref{fracg} we can write
\begin{align*}
\langle (-\Delta_g)^{M,N} u_H,u_H \rangle &= 
\int_{\R^{2n}} g\left(|D_M u_H|\right) |D_M u_H|\, d\mu_N\\
&= \int_{\R^{2n}} h\left(|D_M u_H|\right) \, d\mu_N.
\end{align*}
Hence, since $h$ is continuous and convex,  by Proposition \ref{one-reflection} we obtain that
$$
\langle (-\Delta_g)^{M,N} u_H,u_H \rangle  \leq  \langle (-\Delta_g)^{M,N} u,u \rangle.
$$
Then, proceeding as in Theorem \ref{main} the result follows.
\end{proof}

\begin{cor} \label{cor2}
Let $G$ be an Young function with $g=G'$ such that $h(t)=tg(t)$ is convex and $M, N$ satisfying \eqref{P1}--\eqref{P4}. For every $\Omega\subset \R^n$ open and bounded we have that
$$
\lam_1^G(B)\leq \lam_1^G(\Omega)
$$
where $B$ is a ball with $\mathcal{L}^n(B)=\mathcal{L}^n(\Omega)$. 
\end{cor}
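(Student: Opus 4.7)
My plan is to mimic the classical Faber--Krahn argument, using Theorem \ref{one-reflection-h} as the nonlocal replacement for the Pólya--Szegő inequality for the Dirichlet energy, and to transport an eigenfunction from $\Omega$ to $B$ via Schwarz symmetrization. Fix a level $\mu>0$. Following the variational framework set up for $\lam_\mu$ (cf.\ \cite{S} and the discussion preceding the statement), I would first characterize
$$
\lam_\mu(\Omega)=\inf\left\{\frac{\langle (-\Delta_g)^{M,N} u, u\rangle}{\int_\Omega g(u)u\,dx}\,:\,u\in W^{M,N,G}_0(\Omega),\ \Phi_G(u)=\mu\right\}
$$
and use the compactness statement of Theorem \ref{teo.comp} (this is where assumption \eqref{P4} is essential), together with the lower semicontinuity of $\Phi_{M,N,G}$ along $a.e.$ convergent subsequences (via Fatou), to show that the infimum is attained by some nonnegative $u_\mu\in W^{M,N,G}_0(\Omega)$; that $u_\mu$ may be taken nonnegative follows from $G$ even and $g$ odd increasing.

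Next I would consider the Schwarz rearrangement $u_\mu^*$ centered at the origin and check that it is an admissible competitor for $\lam_\mu(B)$. Since the support of $u_\mu^*$ is a ball of Lebesgue measure equal to that of $\{u_\mu>0\}\subset\Omega$ and $\mathcal{L}^n(\Omega)=\mathcal{L}^n(B)$, one has $u_\mu^*\in W^{M,N,G}_0(B)$; the fact that $u_\mu^*\in W^{M,N,G}(\R^n)$ at all is exactly the content of Theorem \ref{main}. Equimeasurability, applied to the monotone functions $t\mapsto G(t)$ and $t\mapsto tg(t)$, immediately yields
$$
\Phi_G(u_\mu^*)=\Phi_G(u_\mu)=\mu,\qquad \int_B g(u_\mu^*)\,u_\mu^*\,dx=\int_\Omega g(u_\mu)\,u_\mu\,dx.
$$
Combining these two identities with the inequality
$$
\langle(-\Delta_g)^{M,N}u_\mu^*,u_\mu^*\rangle\le\langle(-\Delta_g)^{M,N}u_\mu,u_\mu\rangle
$$
granted by Theorem \ref{one-reflection-h} (this is where convexity of $h(t)=tg(t)$ is used), the Rayleigh-type quotient for $u_\mu^*$ on $B$ is no larger than the minimizing value on $\Omega$. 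Hence $\lam_\mu(B)\le\lam_\mu(\Omega)$ for every $\mu>0$, and passing to the infimum in $\mu$ gives the desired $\lam_1^G(B)\le\lam_1^G(\Omega)$.

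The main obstacle, as I see it, is not the symmetrization step (which is a routine application of Theorem \ref{one-reflection-h} and equimeasurability) but rather verifying that $\lam_\mu(\Omega)$ is actually attained. Weak convergence in $W^{M,N,G}_0(\Omega)$ only controls $\Phi_{M,N,G}$ from below by lower semicontinuity, whereas the constraint $\Phi_G(u)=\mu$ is not closed under weak $L^G$ convergence. This is precisely where \eqref{P4} and Theorem \ref{teo.comp} come in: strong convergence of a minimizing sequence in $L^G_{\mathrm{loc}}(\R^n)$, combined with the Poincaré inequality (Corollary \ref{poincare.norma}) which confines supports inside the bounded set $\Omega$, and with the $\Delta_2$ condition satisfied by $G$ and $G^*$ (Remark \ref{reflexivo}), upgrade $\Phi_G$-semicontinuity to continuity and pass the normalization $\Phi_G(u_\mu)=\mu$ to the limit. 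Once existence is in hand, the rest of the argument is the symmetrization chain described above.
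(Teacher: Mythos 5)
The core of your argument --- symmetrize a normalized eigenfunction, use Theorem \ref{one-reflection-h} to decrease the numerator $\langle(-\Delta_g)^{M,N}u,u\rangle=\int_{\R^{2n}}h(|D_Mu|)\,d\mu_N$, and use equimeasurability to preserve both $\Phi_G(u)=\mu$ and $\int g(u)u\,dx$, noting that $u^*\in W^{M,N,G}_0(B)$ --- is exactly the intended mechanism, and those steps are fine (including the reduction to $u\ge 0$).

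The genuine gap is the very first step, the ``characterization''
$\lam_\mu(\Omega)=\inf\bigl\{\langle(-\Delta_g)^{M,N}u,u\rangle/\int_\Omega g(u)u\,dx\,:\,u\in W^{M,N,G}_0(\Omega),\ \Phi_G(u)=\mu\bigr\}$,
which you assert but do not prove, and which is \emph{not} how $\lam_\mu$ is defined in this paper. Following \cite{S}, $\lam_\mu(\Omega)$ is the eigenvalue (Lagrange multiplier) attached to the level-$\mu$ eigenfunction obtained by minimizing the modular $\Phi_{M,N,G}$ on the constraint set $\{\Phi_G(u)=\mu\}$; it equals the Rayleigh-type quotient evaluated at that particular eigenfunction, not the infimum of the quotient over the whole constraint manifold. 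Because the problem is not homogeneous, a minimizer of the constrained quotient does not satisfy \eqref{eig.prob} (its Euler--Lagrange equation involves the derivatives of both numerator and denominator plus the constraint), so there is no reason for the two quantities to coincide --- this is precisely the kind of discrepancy the paper itself flags when it notes that $\lam_1^{M,N,G}$ and $\alpha_1^{M,N,G}$ may differ and are only comparable up to factors $p^\pm$. Concretely, your chain needs two links: $\lam_\mu(\Omega)=R(u_\mu)$ for the paper's eigenfunction $u_\mu$ (true by definition), and $\lam_\mu(B)\le R(u_\mu^*)$ for the symmetrized competitor; the second link is exactly the unproved characterization on the ball, and from the definitions alone one only gets it with a spurious factor (estimating $\lam_\mu(B)$ through $\alpha_\mu(B)\le\Phi_{M,N,G}(u_\mu^*)$ and condition \eqref{cond} yields $\lam_\mu(B)\le(p^+/p^-)^2\,\lam_\mu(\Omega)$, not the clean inequality). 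Until you either prove the Rayleigh-quotient characterization of $\lam_\mu$ or otherwise relate $R(u_\mu^*)$ to an actual eigenvalue of $B$ at some level, the proof establishes Faber--Krahn for a different (possibly smaller) quantity than $\lam_1^{M,N,G}$. By contrast, your lengthy discussion of attainment on $\Omega$ via Theorem \ref{teo.comp} and \eqref{P4} is not where the difficulty lies: existence of the normalized eigenfunctions is already part of the framework imported from \cite{S}, and if one did adopt the infimum-of-quotient definition, attainment would not even be needed (one could symmetrize a minimizing sequence).
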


\section*{Acknowledgements}

This paper is partially supported by grants UBACyT 20020130100283BA and 20020160100002BA , CONICET PIP 11220150100032CO 
and 11220130100006CO, and by ANPCyT PICT 2012-0153 and 2014-1771.

All of the authors are members of CONICET.

\bibliographystyle{amsplain}
\bibliography{biblio}

\providecommand{\bysame}{\leavevmode\hbox to3em{\hrulefill}\thinspace}
\providecommand{\MR}{\relax\ifhmode\unskip\space\fi MR }
\providecommand{\MRhref}[2]{%
  \href{http://www.ams.org/mathscinet-getitem?mr=#1}{#2}
}
\providecommand{\href}[2]{#2}
\begin{thebibliography}{10}

\bibitem{Adams}
Robert~A. Adams and John J.~F. Fournier, \emph{Sobolev spaces}, second ed.,
  Pure and Applied Mathematics (Amsterdam), vol. 140, Elsevier/Academic Press,
  Amsterdam, 2003. \MR{2424078}

\bibitem{AL}
Frederick~J. Almgren, Jr. and Elliott~H. Lieb, \emph{Symmetric decreasing
  rearrangement is sometimes continuous}, J. Amer. Math. Soc. \textbf{2}
  (1989), no.~4, 683--773. \MR{1002633}

\bibitem{A}
Jos\'{e}~Luis Ansorena and Oscar Blasco, \emph{Characterization of weighted
  {B}esov spaces}, Math. Nachr. \textbf{171} (1995), 5--17. \MR{1316348}

\bibitem{ABS}
Elhoussine {Azroul}, Abdelmoujib {Benkirane}, and Mohammed {Srati},
  \emph{{Introduction to fractional Orlicz-Sobolev spaces}}, arXiv e-prints
  (2018), arXiv:1807.11753.

\bibitem{BLP}
L.~Brasco, E.~Lindgren, and E.~Parini, \emph{The fractional {C}heeger problem},
  Interfaces Free Bound. \textbf{16} (2014), no.~3, 419--458. \MR{3264796}

\bibitem{BrSo}
Friedemann Brock and Alexander~Yu. Solynin, \emph{An approach to symmetrization
  via polarization}, Trans. Amer. Math. Soc. \textbf{352} (2000), no.~4,
  1759--1796. \MR{1695019}

\bibitem{ChPu}
Giuseppe Chiti, \emph{Rearrangements of functions and convergence in {O}rlicz
  spaces}, Applicable Anal. \textbf{9} (1979), no.~1, 23--27. \MR{536688}

\bibitem{FBPLS}
Juli{\'a}n {Fern{\'a}ndez Bonder}, Mayte {P{\'e}rez-Llanos}, and Ariel~M.
  {Salort}, \emph{{A H\"older Infinity Laplacian obtained as limit of Orlicz
  Fractional Laplacians}}, arXiv e-prints (2018), arXiv:1807.01669.

\bibitem{FBS}
Juli\'{a}n Fern\'{a}ndez~Bonder and Ariel~M. Salort, \emph{Fractional order
  {O}rlicz-{S}obolev spaces}, J. Funct. Anal. \textbf{277} (2019), no.~2,
  333--367. \MR{3952156}

\bibitem{FS}
Rupert~L. Frank and Robert Seiringer, \emph{Non-linear ground state
  representations and sharp {H}ardy inequalities}, J. Funct. Anal. \textbf{255}
  (2008), no.~12, 3407--3430. \MR{2469027}

\bibitem{KK}
Agnieszka Ka\l~amajska and Miroslav Krbec, \emph{Traces of {O}rlicz-{S}obolev
  functions under general growth restrictions}, Math. Nachr. \textbf{286}
  (2013), no.~7, 730--742. \MR{3060842}

\bibitem{Krasnoselskii}
M.~A. Krasnosel'ski\u{\i} and Ja.~B. Ruticki\u{\i}, \emph{Convex functions and
  {O}rlicz spaces}, Translated from the first Russian edition by Leo F. Boron,
  P. Noordhoff Ltd., Groningen, 1961. \MR{0126722}

\bibitem{Lam1}
Nguyen Lam and Guozhen Lu, \emph{Sharp {M}oser-{T}rudinger inequality on the
  {H}eisenberg group at the critical case and applications}, Adv. Math.
  \textbf{231} (2012), no.~6, 3259--3287. \MR{2980499}

\bibitem{Lam2}
\bysame, \emph{A new approach to sharp {M}oser-{T}rudinger and {A}dams type
  inequalities: a rearrangement-free argument}, J. Differential Equations
  \textbf{255} (2013), no.~3, 298--325. \MR{3053467}

\bibitem{Lam3}
Nguyen Lam, Guozhen Lu, and Hanli Tang, \emph{Sharp subcritical
  {M}oser-{T}rudinger inequalities on {H}eisenberg groups and subelliptic
  {PDE}s}, Nonlinear Anal. \textbf{95} (2014), 77--92. \MR{3130507}

\bibitem{Lieb}
Elliott~H. Lieb, \emph{Sharp constants in the {H}ardy-{L}ittlewood-{S}obolev
  and related inequalities}, Ann. of Math. (2) \textbf{118} (1983), no.~2,
  349--374. \MR{717827}

\bibitem{Milman}
Joaquim Mart\'{\i}n and Mario Milman, \emph{Fractional {S}obolev inequalities:
  symmetrization, isoperimetry and interpolation}, Ast\'{e}risque (2014),
  no.~366, x+127. \MR{3308452}

\bibitem{PoSz}
G.~P\'{o}lya and G.~Szeg\"{o}, \emph{Isoperimetric {I}nequalities in
  {M}athematical {P}hysics}, Annals of Mathematics Studies, no. 27, Princeton
  University Press, Princeton, N. J., 1951. \MR{0043486}

\bibitem{Raman}
Jean-Francois Raman, \emph{On some properties of fractional {S}obolev spaces},
  An. Univ. Craiova Ser. Mat. Inform. \textbf{33} (2006), 216--226.
  \MR{2359905}

\bibitem{Rudin}
Walter Rudin, \emph{Real and complex analysis}, third ed., McGraw-Hill Book
  Co., New York, 1987. \MR{924157}

\bibitem{S}
Ariel~M. Salort, \emph{Eigenvalues and minimizers for a non-standard growth
  non-local operator}, Journal of Differential Equations (2019).

\bibitem{schw}
H.~A. {Schwarz}, \emph{{Beweis des Satzes, dass die Kugel kleinere Oberfl\"ache
  besitzt, als jeder andere K\"orper gleichen Volumens.}}, {G\"ott. Nachr.}
  \textbf{1884} (1884), 1--13 (German).

\bibitem{steiner}
J.~Steiner, \emph{Einfache {B}eweise der isoperimetrischen {H}aupts\"{a}tze},
  J. Reine Angew. Math. \textbf{18} (1838), 281--296. \MR{1578194}

\bibitem{Talenti}
Giorgio Talenti, \emph{Inequalities in rearrangement invariant function
  spaces}, Nonlinear analysis, function spaces and applications, {V}ol. 5
  ({P}rague, 1994), Prometheus, Prague, 1994, pp.~177--230. \MR{1322313}

\bibitem{VS}
Jean Van~Schaftingen, \emph{Explicit approximation of the symmetric
  rearrangement by polarizations}, Arch. Math. (Basel) \textbf{93} (2009),
  no.~2, 181--190. \MR{2534427}

\end{thebibliography}

\end{document}